\begin{document}

\hfuzz=6pt

\widowpenalty=10000

\newtheorem{cl}{Claim}
\newtheorem{theorem}{Theorem}[section]
\newtheorem{proposition}[theorem]{Proposition}
\newtheorem{coro}[theorem]{Corollary}
\newtheorem{lemma}[theorem]{Lemma}
\newtheorem{definition}[theorem]{Definition}
\newtheorem{assum}{Assumption}[section]
\newtheorem{example}[theorem]{Example}
\newtheorem{remark}[theorem]{Remark}
\renewcommand{\theequation}
{\thesection.\arabic{equation}}

\def\SL{\sqrt H}

\newcommand{\mar}[1]{{\marginpar{\sffamily{\scriptsize
        #1}}}}
\newcommand{\li}[1]{{\mar{LY:#1}}}
\newcommand{\el}[1]{{\mar{EM:#1}}}
\newcommand{\as}[1]{{\mar{AS:#1}}}

\newcommand\RR{\mathbb{R}}
\newcommand\CC{\mathbb{C}}
\newcommand\NN{\mathbb{N}}
\newcommand\ZZ{\mathbb{Z}}
\def\RN {\mathbb{R}^n}
\renewcommand\Re{\operatorname{Re}}
\renewcommand\Im{\operatorname{Im}}

\newcommand{\mc}{\mathcal}
\newcommand\D{\mathcal{D}}

\newcommand{\la}{\lambda}
\def \l {\lambda}
\newcommand{\eps}{\varepsilon}
\newcommand{\pl}{\partial}
\newcommand{\supp}{{\rm supp}{\hspace{.05cm}}}
\newcommand{\x}{\times}
\newcommand{\lag}{\langle}
\newcommand{\rag}{\rangle}

\newcommand\wrt{\,{\rm d}}

\title[Weak-type endpoint bounds for Bochner-Riesz means  ]{%
Weak-type endpoint bounds for  Bochner-Riesz means\\ for the Hermite operator}
\author{Peng Chen,     Ji Li,   Lesley A.~Ward   and  Lixin Yan }

\address{
    Peng Chen,
    Department of Mathematics,
    Sun Yat-sen University,
    Guangzhou, 510275,
    P.R.~China
    and
    School of Information Technology and Mathematical Sciences,
    University of South Australia,
    Mawson Lakes SA 5095,
    Australia}
\email{
    chenpeng3@mail.sysu.edu.cn}
\address{
    Ji Li,
    Department of Mathematics,
    Macquarie University, NSW 2109,
    Australia
    }
\email{
    ji.li@mq.edu.au}
\address{
    Lesley A.~Ward,
    School of Information Technology and Mathematical Sciences,
    University of South Australia,
    Mawson Lakes SA 5095,
    Australia}
\email{
    lesley.ward@unisa.edu.au}
\address{
    Lixin Yan,
    Department of Mathematics,
    Sun Yat-sen University,
    Guangzhou, 510275,
    P.R.~China}
\email{
    mcsylx@mail.sysu.edu.cn}

\subjclass[2010]{Primary: 42B15; Secondary: 42B08, 42C10.}

 \date{\today}
 \keywords{Bochner-Riesz means, Hermite operator,  weak-type bounds, $L^p$ eigenfunction bounds,  finite speed of
 propagation property.}

\begin{abstract}
We obtain weak-type $(p, p)$ endpoint bounds for Bochner-Riesz
means for the Hermite operator $H = -\Delta + |x|^2$ in
${\mathbb R}^n, n\geq 2$ and for other related operators, for
$1\leq p\leq 2n/(n+2)$, extending earlier results of Thangavelu
and of Karadzhov.
\end{abstract}

\maketitle


\section{Introduction}
\setcounter{equation}{0}

Convergence of the Bochner-Riesz means    on
Lebesgue $L^p$ spaces is one of the classical problems in harmonic analysis.
Let us begin with recalling the Bochner-Riesz means operator $S_R^\delta$ on
$\RN$ which  is  defined by,  for $\delta\ge 0$ and $R>0$,
\begin{eqnarray}\label{e1.1}
\widehat{S^{\delta}_Rf}(\xi)
=\left(1-{|\xi|^2\over R^2}\right)_+^{\delta} \widehat{f}(\xi),  \quad {\rm for\, all}\,\,{\xi \in \RN}.
\end{eqnarray}
 Here $\widehat{f}\,$ denotes the Fourier  transform of $f$ and $(x)_+:=\max\{0,x\}$ for $x\in \mathbb R$.
A natural problem is to characterize  the  optimal range of $\delta$ for which $S^\delta_R$  is  bounded  on $L^p(\RN)$. The  Bochner-Riesz    conjecture is that, for $n\geq 2$ and $1\le p \le 2n/(n+1)$, $S_{R}^\delta $ is bounded on $L^p(\mathbb R^n)$ if and only if
\begin{equation}
\label{e1.2}
\delta> \delta(p)=  n \left({1\over p}-{1\over 2} \right)-{1\over 2}.
\end{equation}
It was shown by Herz that for a given $p$  the above condition on  $\delta$  is
necessary; see \cite{He}.  Carleson and Sj\"olin \cite{CS} proved the conjecture when $n=2$.  Afterward substantial progress has been
made \cite{F1, tvv, Lee,  BoGu}, but the conjecture still remains open for $n\ge 3$ and $p$ close to $2n/(n+1).$
We refer the reader to     Stein's monograph  \cite[Chapter IX]{St2}  and Tao \cite{Ta3} for historical background and more on
the Bochner-Riesz conjecture.

Concerning the endpoint estimates (for $\delta=\delta(p)$) of the Bochner-Riesz means, it is natural to conjecture  that $ S_R^{\delta(p)}  $
is of weak-type $(p, p)$ for $1\le p < 2n/(n+1)$. In the special case $n=2$ the weak-type endpoint conjecture was proved by Seeger \cite{Se} for the full range of $p\in [1,4/3)$.
  In higher dimensions the weak-type endpoint estimate was proved  by Christ \cite{C1, C2}  for the range $1\leq p < 2(n+1)/(n+3)$, making use of the well-known
  $(p, 2)$ restriction theorem
  of Stein-Tomas~\cite[p. 386]{St2}. The weak-type endpoint estimate
 for $p = 2(n+1)/(n+3)$ was  proved by Tao~\cite{T2}. As  shown  by Tao \cite{Ta1},
 the weak-type endpoint Bochner-Riesz conjecture is equivalent to the standard Bochner-Riesz conjecture.

 Inspired by the works of Christ and Tao \cite{C1, C2, T2},
  Ouhabaz, Sikora and the first and fourth authors of this paper  extended  the above results   to
the Bochner-Riesz means associated to  second-order elliptic differential operators
$L$ on $\RN$ which are self-adjoint and formally non-negative; see~\cite{COSY}.
Such an operator $L$ admits a  spectral resolution
$$
Lf=\int_0^{\infty}\lambda dE_L(\lambda) f, \ \ \ \ f\in L^2(\RN),
$$
where  $E_L(\lambda)$ is the projection-valued measure supported on the spectrum of $L$.
Notice that the spectrum  of $L$ may be continuous, discrete, or a combination of both.
By the spectral theorem,
the Bochner-Riesz means for $L$ of order $\delta\geq 0$ with $R>0$ are defined by
 \begin{equation}\label{e1.3}
 S^{\delta}_R(L) f =   \int_0^{R^2} \left(1-\frac{\lambda}{R^2}\right)^{\delta}_+dE_L(\lambda) f
   \end{equation}
 for $f\in L^2(\RN)$. In the special case when  $-L$ is the standard Laplace operator
$\Delta=\sum_{i=1}^n\partial_{x_i}^2$ on $\RN$,  $S^{\delta}_R(-\Delta)$
  coincides with the usual  $S^{\delta}_R$.
   It was proved in \cite[Theorem I.24]{COSY}
  that   if $L$  satisfies the finite speed of propagation property   \eqref{FS} (see Section 2 below),
 then for some $p$ with $1\leq p <2$, the spectral measure estimate
   \begin{equation*}\label{rp}
 \tag{${\rm R_p}$}
\|dE_{\sqrt L}(\lambda)\|_{L^p(\RN)\to  L^{p'}(\RN)  }\le C\ \lambda^{{n}(\frac{1}{p}-\frac{1}{p'})-1}, \ \ \lambda>0\
 \end{equation*}
implies  weak-type $(p,p)$ estimates for  $S_R^{\delta(p)} (L)$,
uniformly in $R$.  This  recovers  the  known results in \cite{C1, C2, T2}.
To understand  the condition \eqref{rp}, we recall that for $\lambda>0$,
the restriction operator $R_\lambda$ is  given by    $R_\lambda(f)(\omega) =\hat{f}({\lambda} \omega),$
where $\omega\in   {\bf S}^{n-1}$ (the unit sphere).
Then
 $
d E_{\sqrt{-\Delta}}(\lambda) =(2\pi)^{-n} \lambda^{n-1}R_\lambda^*R_\lambda,
 $
and  for $p$ with $1\le p\le 2(n+1)/(n+3)$ the Stein-Tomas $(p,2)$ restriction theorem \cite[p.386]{St2}
is equivalent to the estimate \eqref{rp}.
The condition \eqref{rp}  is  valid for a broad class of second-order
elliptic operators  such as
scattering operators  on ${\mathbb R}^3$  and  Schr\"odinger operators $-\Delta + V$ on $\RR^n$,
where $V$ is smooth and decays sufficiently fast at infinity. See \cite[Propositions III.3 and III.6]{COSY}.

The  condition   \eqref{rp} implies that
  the point spectrum of $L$ is empty.
In particular,
\eqref{rp}  does  not hold  for elliptic
operators on  compact manifolds, nor for the Hermite operator $H= -\Delta+|x|^2$ on~$\RN$.
In the case of the Laplace-Beltrami operator $\Delta_g$ on a compact smooth Riemannian manifold $(M, g)$ of dimension $n\geq 2,$
  Sogge \cite{Sog2} used a Fourier transform side argument to prove that under an additional curvature assumption, one has a (discrete) $(p, 2)$ restriction
  theorem for all $p$ with $1\leq p\leq 2(n+1)/(n+3)$, namely
 \begin{equation*}\label{sp}
 \tag{${\rm S_p}$}
\big\|E_{\sqrt{\Delta_g}}[\lambda, \lambda+1)\big\|_{L^p(M)\to L^{2}(M)} \leq
C (1+\lambda)^{\delta(p)}, \ \ \ \ \lambda \geq 0.
\end{equation*}
In Christ and Sogge \cite{ChS}, it was shown using the condition~\eqref{sp} that $S_R^{\delta(1)}(\Delta_g)$ is weak  $(1, 1)$ uniformly in $R$.
Later, weak-type $(p,p)$ estimates for  $ S_R^{\delta(p)}(\Delta_g) $
were proved by Seeger \cite{Se1} when $1<p < 2(n+1)/(n+3)$ and by Tao \cite{T2} when $p=2(n+1)/(n+3)$. See also \cite[Proposition III.2]{COSY}.

\smallskip

 The purpose of this paper  can be viewed as a continuation of the above body of work  on the weak-type   $L^p$   mapping properties of the Bochner-Riesz summation
for
   the Hermite operator
 $H=-\Delta + |x|^2
$  on $ \RR^n, n\geq 2$, and for other related operators. For the Hermite operator, it is known that
the spectral decomposition of $H$ is given by the Hermite expansion; see \cite{Th1}. Let
$h_{\alpha}(x), \alpha\in {\mathbb N}^n$, be the normalized Hermite functions which are
  eigenfunctions for $H$ with eigenvalues $(2|\alpha|+n)$ where $|\alpha|=\alpha_1 +\cdots +\alpha_n$.
Thus   every $f\in L^2(\RN)$  has the Hermite expansion
\begin{eqnarray} \label{e1.4}
f=\sum_{\alpha}(f, h_{\alpha})h_\alpha,
\end{eqnarray}
where the sum is extended over all multi-indices $\alpha\in {\mathbb N}^n.$
Then the  Bochner-Riesz means for $H$ of order $\delta\geq 0$ with $R>0$ as defined in equation~\eqref{e1.3} with $L=H$ coincide with
\begin{eqnarray}\label{e1.5}
S_R^{\delta}(H)f
 =
\sum_{k=0}^{\infty} \left(1-{2k+n\over R^2}\right)_+^{\delta} P_k f,
\end{eqnarray}
where $P_k$ are the projections
\begin{eqnarray} \label{e1.6}
P_kf=\sum_{|\alpha|=k}(f, h_{\alpha})h_\alpha.
\end{eqnarray}
The Hermite expansion \eqref{e1.4} and the corresponding Bochner-Riesz means \eqref{e1.5} were studied
in~\cite{Th1}. When $n\geq 2$, the conjecture  is that   the operators
$S_R^{\delta}(H)$ are bounded on $L^p(\RN)$, uniformly in $R>0$,  if and only if $\delta>\delta(p)$,
where $\delta(p)$ is the same critical index as defined in~\eqref{e1.2} for the Bochner-Riesz means in the case of the standard Laplacian on $\RN$
(see~\cite[p.259]{Th4}).
In \cite{Th1}, Thangavelu proved that the conjecture is true when $p=1$ and that for a given $p$ the above condition on
$\delta$ is necessary. In 1994, Karadzhov  \cite{Kar} proved the conjecture in the range  $1\leq p\leq 2n/(n+2).$
The main ingredient in the proof of these results
is   to establish the  following restriction type theorem
  \begin{eqnarray}\label{e1.7}
\|P_k f\|_{L^2(\RN)}\leq Ck^{(\delta(p)-1/2)/2}\|f\|_{L^p(\RN)},  \ \ \  {\rm for\, all}\, k\in{\mathbb N}
\end{eqnarray}
for the spectral projection operators $P_k$ for $1\leq p\leq 2n/(n+2),$
which is an adaptation of the arguments from  \cite{F3, Sog1} that
the restriction theorem  implies    Bochner-Riesz summation theorems for $L^p(\RN)$.
For more on the Bochner-Riesz summation for the Hermite operator, see also \cite{Th2, Th3, Th4}.

The main goal of this paper is to extend the results of \cite{Kar, Th1} to  weak-type  endpoint results for the range  $1\leq p\leq 2n/(n+2).$
We first recall that for $1\leq p<\infty$, a function $f$ is said to be in weak $L^p({\RN})$, written $f\in L^{p,\infty}(\RN)$, if
$$
\|f\|_{L^{p, \infty}(\RN)}:=\sup_{\alpha>0}  \alpha \left|\{x: |f(x)|>\alpha\}\right|^{1/p}   <\infty.
$$
We can now state our main result, which we put in context in Figure~\ref{Figure1}(b).

\vspace{-0.2cm}
 \begin{theorem}\label{th1.1} \, For $n\geq 2$ and  $1\leq p\leq 2n/(n+2),$
the Bochner-Riesz means $S_R^{\delta(p)} (H)$ are of weak-type $(p,p)$ uniformly in $R$. That is, there exists a constant $C>0$ independent
of $R$ such that
 \begin{eqnarray*}
   \|S_R^{\delta(p)}(H) f\|_{L^{p, \infty}(\RN)} \leq C\|f\|_{L^p(\RN)}, \quad\quad\text{for all $f\in L^p(\RN)$ and all $R>0$}.
 \end{eqnarray*}
\end{theorem}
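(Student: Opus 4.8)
The plan is to adapt the method of Christ \cite{C1,C2}, Seeger \cite{Se1} and Tao \cite{T2} for Bochner--Riesz means at the critical index, in the operator-theoretic form of \cite{COSY}, using the restriction estimate \eqref{e1.7} for the projections $P_k$ in place of the continuous spectral measure estimate \eqref{rp}, which fails for $H$ because $H$ has pure point spectrum. The structural facts about $H$ that I would use are: (a) finite speed of propagation for the wave operator $\cos(t\sqrt{H})$, i.e. the kernel of $\cos(t\sqrt{H})$ is supported in $\{(x,y):|x-y|\le|t|\}$, which holds because the second-order part of $H=-\Delta+|x|^2$ is the flat Laplacian; (b) Gaussian-type upper bounds for the Hermite heat kernel $e^{-tH}$ (Mehler's formula), which permit a Calder\'on--Zygmund decomposition on $(\RN,dx)$; and (c) the companion bound $\|P_k\|_{L^p\to L^p}\lesssim(1+k)^{\delta(p)}$, which follows from \eqref{e1.7} together with the fact that $P_kf$ is essentially concentrated in $\{|x|\lesssim\sqrt{k}\}$.

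By linearity it suffices to prove $|\{x:|S_R^{\delta(p)}(H)f(x)|>1\}|\le C\|f\|_{L^p(\RN)}^p$ uniformly in $R$; and since for $R$ bounded $S_R^{\delta(p)}(H)$ is a finite linear combination of the $P_k$ with $2k+n\le R^2$, (c) shows it is then bounded on $L^p$, so we may assume $R$ large. Decompose the multiplier dyadically near the endpoint, $S_R^{\delta(p)}(H)=m_R^0(H)+\sum_{j\ge1}m_R^j(H)$, where $m_R^0$ is a fixed smooth function of $\lambda/R^2$ supported in $\{\lambda\le 3R^2/4\}$ and $m_R^j$ is supported where $1-\lambda/R^2\sim 2^{-j}$, with $|m_R^j|\lesssim 2^{-j\delta(p)}$ and $|\partial_\lambda^N m_R^j|\lesssim 2^{-j\delta(p)}(2^j/R^2)^N$. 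The bulk term $m_R^0(H)$ is a smooth compactly supported function of $H/R^2$ and is bounded on $L^p$ uniformly in $R$ --- either by the classical argument that a restriction estimate such as \eqref{e1.7} implies boundedness of such multipliers, or by a Mikhlin--H\"ormander spectral multiplier theorem for $H$ --- so it is harmless.

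For $\sum_{j\ge1}m_R^j(H)$ I would run a Calder\'on--Zygmund decomposition of $f$ at height $1$: $f=g+b$ with $\|g\|_{\infty}\lesssim1$, $\|g\|_{L^p}\lesssim\|f\|_{L^p}$, $b=\sum_Q b_Q$ over pairwise disjoint cubes $Q$ with $\int b_Q=0$, $\supp b_Q\subseteq Q$, $\|b_Q\|_{L^p}^p\lesssim|Q|$, and $\sum_Q|Q|\lesssim\|f\|_{L^p}^p$. The good part is handled in $L^2$: $\sum_{j\ge1}m_R^j(H)=S_R^{\delta(p)}(H)-m_R^0(H)$ is bounded on $L^2$, and $\|g\|_{L^2}^2\le\|g\|_{\infty}^{2-p}\|g\|_{L^p}^p\lesssim\|f\|_{L^p}^p$, so Chebyshev gives $|\{|\sum_{j\ge1}m_R^j(H)g|>1/2\}|\lesssim\|f\|_{L^p}^p$. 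For the bad part, regrouping the cubes with $r_Q<1/R$ into disjoint cubes of side $1/R$ --- which preserves the cancellation $\int b_Q=0$ --- reduces matters to $r_Q\ge 1/R$; one then enlarges each $Q$ to a fixed dilate $Q^*$, places $\bigcup_Q Q^*$ in the exceptional set (of measure $\lesssim\sum_Q|Q|\lesssim\|f\|_{L^p}^p$), and must estimate $\sum_{j\ge1}m_R^j(H)b_Q$ on $E:=(\bigcup_Q Q^*)^c$.

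This last estimate is the heart of the matter and the main obstacle. Writing $m_R^j(H)=\widetilde m_R^j(\sqrt{H})$ with $\widehat{\widetilde m_R^j}$ concentrated, up to rapidly decaying tails, in $\{|t|\lesssim 2^j/R\}$, finite speed of propagation shows that the main part of $m_R^j(H)b_Q$ is supported in the $(2^j/R)$-neighbourhood of $Q$; hence for $2^j\le Rr_Q$ it lies in $Q^*$ and drops out, while for $2^j>Rr_Q$ it spills onto $E$. The terms with $2^j$ of size $\sim R^2$ involve only a single projection $P_{k_0}$ --- the spectral gap of $\sqrt{H}$ near $R$ being $\sim 1/R$ --- and for these (c) against the coefficient $2^{-j\delta(p)}$ is more than enough. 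For the intermediate terms the restriction estimate \eqref{e1.7} yields $\|m_R^j(H)\|_{L^p\to L^2}\lesssim(R/2^j)^{\delta(p)+1/2}$, and the near-orthogonality of the $m_R^j(H)$ for distinct $j$ controls the whole tail in $L^2$; the difficulty is that a naive Cauchy--Schwarz estimate over the dyadic annuli around $Q$ loses a logarithmic (when $p=1$) or polynomial-in-$R$ (when $p>1$) factor at the critical index $\delta=\delta(p)$. Removing this loss forces one to combine the finite-speed localisation, the $L^2$-orthogonality and the precise form of \eqref{e1.7} so as to exploit the oscillation in the Bochner--Riesz kernel of $H$; this is exactly where the finer arguments of Christ, Seeger and Tao are needed, and where the restriction of $p$ to $1\le p\le 2n/(n+2)$ --- the range of validity of \eqref{e1.7} --- is used. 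The outcome is a per-cube estimate of $\int_E|\sum_{j\ge1}m_R^j(H)b_Q|$ that sums against $\sum_Q|Q|\lesssim\|f\|_{L^p}^p$, and a final application of Chebyshev completes the proof.
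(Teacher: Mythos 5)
Your overall architecture (Calder\'on--Zygmund decomposition, finite speed of propagation to kill the compactly-supported-Fourier-transform part of the multiplier on the complement of the exceptional set, and the projection bound \eqref{e1.7} in place of the continuous restriction estimate \eqref{rp}) matches the paper's. But the proposal stops exactly where the proof has to start, and the step you defer to ``the finer arguments of Christ, Seeger and Tao'' is the one place where those arguments do \emph{not} suffice. The intermediate range you worry about (many eigenvalues per multiplier piece) is handled by the standard Tao factorization $S_R^{\delta(p)}=m_k+\eta_k n_k$ with $\sum_k|\eta_k|^2\le C$ (Lemmas~\ref{le2.4} and~\ref{le2.5}); that is not new. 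The genuinely new obstruction is the opposite regime: when a cube has radius larger than $R$ (equivalently $2^k>R^2$), the spectral window $|\lambda-R|\lesssim R2^{-k}$ on which the tail multiplier concentrates is \emph{shorter than the gap} $\sim1/R$ between consecutive eigenvalues of $\sqrt H$, yet may still contain an eigenvalue; counting ``at most one eigenvalue'' instead of ``$\sim R^22^{-k}$ eigenvalues'' produces the unavoidable extra factor $\max\{2^{k/2}R^{-1},1\}$ in the key $L^2$ bound \eqref{e3.11}. This factor has no analogue on compact manifolds (balls cannot be larger than the diameter) or for operators satisfying \eqref{rp} (one integrates over a short interval of continuous spectrum), so no amount of importing Christ--Seeger--Tao removes it. The paper's resolution is the endpoint \emph{a priori} estimate \eqref{e1.8}, $\|(1+H)^{-n(1/p-1/2)/2}\|_{L^2\to L^{p,\infty}}\le C$ (Lemma~\ref{le2.2}, proved via the L\"owner--Heinz inequality and a weak-type Riesz-transform bound), combined with the weighted almost-orthogonality \eqref{e2.10}: one pays a factor $R^{\gamma}$ in $L^2$ and recovers it through the weak-$L^p$ smoothing of $(1+H)^{-\gamma/2}$, the exponents closing precisely because $p\le 2n/(n+2)$. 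Nothing in your proposal plays this role.

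Two further points. Your substitute for the large-$2^j$ (single-projection) terms, the bound $\|P_k\|_{L^p\to L^p}\lesssim(1+k)^{\delta(p)}$, is plausible but is itself an unproved lemma: ``$P_kf$ is essentially concentrated in $\{|x|\lesssim\sqrt k\}$'' needs a quantitative tail estimate valid for arbitrary $f\in L^p$, and this is not in the paper (which avoids needing any $L^p\to L^p$ projection bound). And your final summation over cubes is phrased as a per-cube $L^1$ integral over $E$, which yields weak type only for $p=1$; for $1<p\le 2n/(n+2)$ one cannot add weak-$L^p$ quasi-norms of the non-disjointly-supported pieces, which is why both Tao and the paper keep the entire bad-part estimate in $L^2$ (or in $L^2$ composed with $(1+H)^{-\gamma/2}$) and only at the last moment use $(\sum_j a_j)^{p/2}\le\sum_j a_j^{p/2}$. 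As written, the proposal is a correct road map with the decisive new ingredient missing.
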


\vspace{-0.5cm}
\begin{figure}[htb]
  \begin{minipage}[t]{8.0cm}
\centering
\hspace{-1cm}
\begin{pspicture}(5.0,4.0)
    \psset{xunit=5cm, yunit=1.39cm}
    \newgray{gray0}{.95}
    \newgray{gray1}{.7}
    \newgray{gray2}{.75}
    \newgray{gray3}{.55}
    \pspolygon[linestyle=none,fillstyle=solid,fillcolor=gray3](.5,0)(.2,.7)(0,1.5)(0,2.5)(1,2.5)(1,1.5)(.8,.7)
    \pspolygon[linestyle=none,fillstyle=solid,fillcolor=gray2](.5,0)(.8,.7)(.625,0)
\pspolygon[linestyle=none,fillstyle=solid,fillcolor=gray2](.5,0)(.2,.7)(.375,0)
    \pspolygon[linestyle=none,fillstyle=solid,fillcolor=gray2](.5,0)(.8,.7)(.625,0)
    \pspolygon[linestyle=none,fillstyle=solid,fillcolor=gray3](.5,0)(0,2)(0,2.8)(1,2.8)(1,2)
    \psline[linewidth=.5pt]{->}(0,0)(1.2,0)
    \uput[d](1.2,0){\large$\frac{1}{p}$}
    \psline[linewidth=.5pt]{->}(0,0)(0,3)
    \uput[l](0,2.9){ $\delta$}
    \psline[linestyle=dashed](.375,0)(0,1.5)
\psline[linestyle=dashed](.5,0)(.8,.7)
\psline[linestyle=dashed](.5,0)(.2,.7)
    \psline[linewidth=1pt](.8,.7)(1,1.5)
    \psline[linestyle=dashed](.625,0)(.8,.7)
    \psline[linestyle=dotted](.8,0)(.8,.7)
    \psline[linestyle=dotted](.2,0)(.2,.7)
    \psline[linestyle=dashed](1,0)(1,1.5)
    \psdot(1,1.5)
    \psdot(.8,.7)
    \uput[l](1.12,1.28){\large A}
    \uput[l](.92,.55){\large B}
    \uput[l](0,1.5){\large$\frac{n-1}{2}$}
    \uput[d](.5,0){\large$\frac{1}{2}$}
      \uput[d](0,0){$0$}
      \uput[d](1,0){$1$}
    \uput[d](.625,0){\large$\frac{n+1}{2n}$}
    \uput[d](.375,0){\large$\frac{n-1}{2n}$}
    \uput[d](.8,0){\large$\frac{n+3}{2(n+1)}$}
    \uput[d](.2,0){\large$\frac{n-1}{2(n+1)}$}
\rput(0.5,-0.9){{\footnotesize Figure 1(a). Results for Laplacian operator }}
\rput(0.5,-1.2){{\footnotesize on $\RN$ or on compact manifolds.}}

\end{pspicture}
\vspace{1cm}
  \end{minipage}
\hfill
\begin{minipage}[t]{8.0cm}
\centering
\begin{pspicture}(5.0,4.0)
    \psset{xunit=5cm, yunit=1.39cm}
    \newgray{gray0}{.95}
    \newgray{gray1}{.7}
    \newgray{gray2}{.75}
    \newgray{gray3}{.55}
    \pspolygon[linestyle=none,fillstyle=solid,fillcolor=gray3](.5,0)(.15,.895)(0,1.5)(0,2.5)(1,2.5)(1,1.5)(.85,.895)
    \pspolygon[linestyle=none,fillstyle=solid,fillcolor=gray2](.5,0)(.85,.895)(.625,0)
\pspolygon[linestyle=none,fillstyle=solid,fillcolor=gray2](.5,0)(.15,.895)(.375,0)
    \pspolygon[linestyle=none,fillstyle=solid,fillcolor=gray2](.5,0)(.85,.895)(.625,0)
    \pspolygon[linestyle=none,fillstyle=solid,fillcolor=gray3](.5,0)(0,2)(0,2.8)(1,2.8)(1,2)
    \psline[linewidth=.5pt]{->}(0,0)(1.2,0)
    \uput[d](1.2,0){\large$\frac{1}{p}$}
    \psline[linewidth=.5pt]{->}(0,0)(0,3)
    \uput[l](0,2.9){ $\delta$}
    \psline[linestyle=dashed](.375,0)(0,1.5)
\psline[linestyle=dashed](.5,0)(.85,.895)
\psline[linestyle=dashed](.5,0)(.15,.895)
    \psline[linewidth=1pt](.85,.895)(1,1.5)
    \psline[linestyle=dashed](.625,0)(.85,.895)
    \psline[linestyle=dotted](.85,0)(.85,.895)
    \psline[linestyle=dotted](.15,0)(.15,.895)
    \psline[linestyle=dashed](1,0)(1,1.5)
    \uput[l](1.12,1.28){\large A}
    \uput[l](1.0,.75){\large $\rm B'$}
    \psdot(1,1.5)
    \psdot(.85,.895)
    \uput[l](0,1.5){\large$\frac{n-1}{2}$}
    \uput[d](.5,0){\large$\frac{1}{2}$}
      \uput[d](0,0){$0$}
      \uput[d](1,0){$1$}
    \uput[d](.625,0){\large$\frac{n+1}{2n}$}
    \uput[d](.375,0){\large$\frac{n-1}{2n}$}
    \uput[d](.85,0){\large$\frac{n+2}{2n}$}
    \uput[d](.15,0){\large$\frac{n-2}{2n}$}
\rput(0.5,-0.9){{\footnotesize Figure 1(b). Results for Hermite operator, }}
\rput(0.5,-1.2){{\footnotesize including our result.}}
\end{pspicture}
  \end{minipage}
  \vspace{0.6cm}
 \caption{ \footnotesize{ Schematic diagrams, in dimension $n\geq 3$, summarizing known results on boundedness
 of Bochner-Riesz means $S_R^\delta$, in Figure 1(a) for the Laplacian operator and in Figure 1(b) for the Hermite operator. For
 each point $(1/p,\delta)$ in the dark gray regions, $S_R^\delta$ is bounded on $L^p$; for $(1/p,\delta)$
 in the light gray triangles, the boundedness of $S_R^\delta$ is unknown or partial results are known; and for
 $(1/p,\delta)$ in the white triangles, $S_R^\delta$ is not bounded on $L^p$.  For $(1/p,\delta)$
  on the line segment $AB$, where $\delta=\delta(p)$, in Figure 1(a), the (Laplacian) Bochner-Riesz means $S_R^\delta=S_R^{\delta(p)}$ satisfy the
  weak-type endpoint estimate. Our result in Theorem~\ref{th1.1} is that for $(1/p,\delta)$
  on the line segment $AB'$ in Figure 1(b), the (Hermite) Bochner-Riesz means $S_R^\delta(H)=S_R^{\delta(p)}(H)$ satisfy the   weak-type endpoint estimate. Note that $A$ represents the same point in both figures, but $B\neq B'$. }
  }\label{Figure1}
 \end{figure}
\vspace{1cm}
As a consequence of this theorem, we have that when $f\in L^p(\RN)$,
 the operator $S_R^{\delta(p)}(H) f$  converges in measure to $f$.
By  this we mean that for each $\alpha>0$,
\begin{eqnarray*}
 \big|\{x: |S_R^{\delta(p)} (H)f(x)- f(x)|>\alpha \}\big|  \rightarrow  0 \ {\rm as}\ R \rightarrow \infty.
   \end{eqnarray*}
   In fact $S_R^{\delta(p)}(H) f\rightarrow f$ in the $L^{p, \infty}$ quasi-norm, that is,
\begin{eqnarray*}
 \|S_R^{\delta(p)}(H) f - f\|_{L^{p, \infty}(\RN)} \rightarrow 0.
   \end{eqnarray*}
  This result is of course considerably weaker than almost-everywhere convergence, and, in fact,
  at the critical index $\delta(p)$ one does not generally have almost-everywhere convergence of the Riesz means
  to a given $L^1$ function; see Stein and Weiss~\cite{SW}.

We would like to mention that our restriction-type
condition~\eqref{e1.7} is weaker than the classical
restriction-type condition~\eqref{rp}. To compensate for this
difference, when proving the weak-type $L^p$ estimates for
$S_R^{\delta(p)} (H)$ in our Theorem~\ref{th1.1}, we need an
\emph{a priori} estimate
\begin{eqnarray}\label{e1.8}
    \|(1+H)^{-(\delta(p)+1/2)/2}\|_{L^2(\RN)\to L^{p,\infty}(\RN)}
    \leq C
\end{eqnarray}
for $1\leq p\leq 2n/(n+2)$, and this is a crucial observation
in our paper. Then Theorem~\ref{th1.1} is proved by using the
\emph{a priori} estimate~\eqref{e1.8}, along with the $L^p$
eigenfunction bounds \eqref{e1.7} for the Hermite operator, and
the approach in the work of Christ~\cite{C1, C2} and Tao~\cite{T2}.
Their approach is based on $L^2$ Calder\'on-Zygmund techniques
(as used in Fefferman \cite{F1}), a spatial decomposition of
the Bochner-Riesz summation, and the fact that if the inverse
Fourier transform $F$ is supported on a set of width $R$, then
by the finite speed of propagation property the operator
$F(\sqrt L)$ is supported in a $CR$-neighbourhood of the
diagonal.

We outline our proof of Theorem~\ref{th1.1} here,
highlighting the point where it differs from the approach of Christ~\cite{C1, C2} and Tao~\cite{T2}.
We first use $L^2$ Calder\'on-Zygmund techniques to
decompose the function~$f$ into $f = g + \sum_j b_j$.
Next we make a decomposition (Lemmas~\ref{le2.3}
and~\ref{le2.4}) of the Bochner-Riesz multiplier function,
corresponding to this Calder\'on-Zygmund decomposition, in such
a way that the main contribution acting on $b_j$ is a
multiplier operator $n_j(\sqrt H)$,
where the support of~$n_j : \mathbb{R} \to \mathbb{R}$ is mostly concentrated on a set whose radius goes
like the reciprocal of the radius of the support of $b_j$.
For the ``good" part $g$ and for those
$b_j$ which have small support, the argument is similar to that
in~\cite{T2}. However for those
$b_j$ with large support, following the argument in~\cite{T2}, we get an extra factor
in the upper bound for the $L^2$ estimate of $n_j(\sqrt H)b_j$ (see estimate~\eqref{e3.11} below), compared to the situation treated in Christ~\cite{C1, C2} and Tao~\cite{T2},
where the
operator~$L$ satisfies the restriction type estimate~\eqref{rp} or the manifold on which $f$ is defined is compact.
We overcome the obstacle posed by this extra factor by applying our \emph{a priori}
estimate~\eqref{e1.8} and a modification of the argument in~\cite{T2}. See Section~3 for details, specifically where we use
the \emph{a priori} estimate~\eqref{e1.8} to deduce
estimate~\eqref{e3.10} from estimate~\eqref{e3.11}.

The paper is organized as follows.
In Section 2 we provide some preliminary results, which we need later,
mainly to prove \eqref{e1.8} and a few technical lemmas.
The proof of Theorem~\ref{th1.1} is given in Section 3. In Section 4 we discuss some extensions
of Theorem~\ref{th1.1}
for other operators related to the Hermite operator $H$.

\medskip

\section{Preliminaries}
\setcounter{equation}{0}
For brevity, in the rest of the whole paper, for $1\le p\le+\infty$, we write $L^p$ for $L^p(\RN)$, $L^{p,\infty}$ for $L^{p,\infty}(\RN)$, and so on. We denote the
norm of a function $f\in L^p$ by $\|f\|_p$ and if $T$ is a bounded linear operator from $
L^p$ to $L^q$, $1\le p, \, q\le+\infty$, we write $\|T\|_{p\to q} $ for
the  operator norm of $T$.

In this section, we mainly consider Schr\"odinger operators~$H_V$ similar to the Hermite operator $H$, that is,
  $ H_V= - \Delta + V$ on $ \RR^n$ for $n\ge 2$,
with a  positive potential $V$ which satisfies the conditions
\begin{equation}\label{e2.1}
V \sim |x|^2, \quad |\nabla V| \sim |x|, \quad |\partial_x^2 V| \le 1.
\end{equation}
We first state some basic properties of $H_V$ in Lemmas~\ref{le2.1} and~\ref{le2.5}. Then we prove
estimate~\eqref{e1.8} for $H_V$ in Lemma~\ref{le2.2}. Finally, we state two technical lemmas, Lemmas~\ref{le2.3} and~\ref{le2.4}, for
decompositions of Bochner-Riesz multiplier functions.

The operator   $H_V$ is a self-adjoint
operator on $L^2$.  Since the potential $V$ is nonnegative,
the semigroup kernels ${\mathcal K}_t(x,y)$  of the operators $e^{-tH_V}$  satisfy
\begin{eqnarray}
0\leq {\mathcal K}_t(x,y)\leq h_t(x-y)
\label{e2.2}
\end{eqnarray}
for all $x$, $y\in\RN$ and $t>0$, where
\begin{equation}\label{e2.3}
h_t(x-y)=\frac{1}{(4\pi t)^{n/2}}\,\exp\bigg(-\frac{|x-y|^2}{4t}\bigg)
\end{equation}
is the kernel of the classical heat semigroup
$\{T_t\}_{t>0}=\{e^{t\Delta}\}_{t>0}$ on $\RN$.

To formulate the finite speed of propagation  property for the wave equation corresponding to an operator $H_V$, we
set
\begin{equation*}
\D_r:=\{ (x,\, y)\in \RN \times \RN:  |x-y|\le r \}.
\end{equation*}
Given an  operator $T$ from $L^p$ to $L^q$,
we write
\begin{equation}\label{eq2.4}
\supp  K_{T} \subseteq \D_r
\end{equation}
 if $\langle T f_1, f_2 \rangle = 0$ whenever $f_1 \in L^p(B(x_1,r_1))$ and $f_2 \in L^{q'}( B(x_2,r_2))$ with $r_1+r_2+r < |x_1-x_2|$, where as usual $1/q+1/q'=1$.
 Note that  if $T$ is an
integral operator with   {kernel $K_T$}, then (\ref{eq2.4}) coincides
with the  standard meaning of $\supp  K_{T}  \subseteq \D_r$,
 namely, $K_T(x, \, y)=0$ for all $(x, \, y) \notin \D_r$.

 Following \cite{CGT}, given  a nonnegative self-adjoint operator $L$
on $L^2$ we say that $L$ satisfies the \emph{finite speed of
propagation property} if
 \begin{equation*}\label{FS}
  \tag{FS}
\supp  K_{\cos(t\sqrt{L}\,\,)} \subseteq \D_t, \quad {\rm for\,all\,} t> 0\,.
\end{equation*}
From \eqref{e2.2} and \eqref{e2.3}, it follows (see for example \cite{CouS})  that the operator $H_V$ satisfies the finite speed of
propagation property \eqref{FS}. Then we have
  the following result.

\begin{lemma}\label{le2.1}
Assume  that $F$ is an even bounded Borel function with Fourier
transform  $\widehat{F} \in L^1(\RR)$ and that
$\supp \widehat{F} \subseteq [-r, r]$.
Then the kernel $K_{F(\sqrt{H_V})}$ of the operator $F(\sqrt{H_V})$ satisfies
$$
\supp K_{F(\sqrt{H_V})} \subseteq \D_r.
$$
\end{lemma}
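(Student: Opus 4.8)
The plan is to derive the finite speed of propagation for general functions $F$ from the basic property \eqref{FS} for the cosine propagator $\cos(t\sqrt{H_V})$ via the Fourier inversion formula. Recall that since $H_V$ is a nonnegative self-adjoint operator, the operator $\cos(t\sqrt{H_V})$ is well-defined by the spectral theorem, and it solves the wave equation associated to $H_V$; we have already noted that $H_V$ satisfies \eqref{FS}, so $\supp K_{\cos(t\sqrt{H_V})}\subseteq \D_{|t|}$ for every $t$.

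The key identity is the Fourier representation. Since $F$ is even, bounded, and $\widehat F\in L^1(\RR)$, Fourier inversion gives, for any $\lambda\ge 0$,
\begin{equation*}
F(\lambda)=\frac{1}{2\pi}\int_{\RR}\widehat F(t)\,e^{it\lambda}\wrt t=\frac{1}{2\pi}\int_{\RR}\widehat F(t)\cos(t\lambda)\wrt t,
\end{equation*}
where the last equality uses that $F$ (hence $\widehat F$) is even, so the odd part $\sin(t\lambda)$ integrates to zero. Applying the spectral theorem with $\lambda$ replaced by $\sqrt{H_V}$ then yields
\begin{equation*}
F(\sqrt{H_V})=\frac{1}{2\pi}\int_{\RR}\widehat F(t)\cos(t\sqrt{H_V})\wrt t,
\end{equation*}
the integral converging in the strong operator topology because $\widehat F\in L^1$ and $\|\cos(t\sqrt{H_V})\|_{2\to 2}\le 1$ uniformly in $t$.

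Now I would use the support hypothesis $\supp\widehat F\subseteq[-r,r]$ to restrict the range of integration to $|t|\le r$, and then test the operator against functions $f_1\in L^2(B(x_1,r_1))$ and $f_2\in L^2(B(x_2,r_2))$ with $r_1+r_2+r<|x_1-x_2|$. For each $|t|\le r$ we have $r_1+r_2+|t|\le r_1+r_2+r<|x_1-x_2|$, so by \eqref{FS} (in the sense of \eqref{eq2.4}) we get $\langle \cos(t\sqrt{H_V})f_1,f_2\rangle=0$. Hence
\begin{equation*}
\langle F(\sqrt{H_V})f_1,f_2\rangle=\frac{1}{2\pi}\int_{|t|\le r}\widehat F(t)\,\langle\cos(t\sqrt{H_V})f_1,f_2\rangle\wrt t=0,
\end{equation*}
where interchanging the integral and the inner product is justified by $\widehat F\in L^1$ together with the uniform bound $|\langle\cos(t\sqrt{H_V})f_1,f_2\rangle|\le\|f_1\|_2\|f_2\|_2$. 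By the definition \eqref{eq2.4} this is exactly the statement $\supp K_{F(\sqrt{H_V})}\subseteq\D_r$. The only genuinely delicate point is the justification of the various interchanges of integration (Fubini and moving the bounded functionals inside the Bochner-type integral), which is routine given the $L^1$ assumption on $\widehat F$ and the uniform operator bound; no new idea beyond \eqref{FS} and Fourier inversion is needed. (This is the standard transference argument of Cheeger-Gromov-Taylor; see \cite{CGT}.)
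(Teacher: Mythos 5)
Your argument is correct and is essentially identical to the paper's proof: both rely on the Fourier inversion formula $F(\sqrt{H_V})=\frac{1}{2\pi}\int\widehat F(t)\cos(t\sqrt{H_V})\,dt$ (valid since $F$ is even with $\widehat F\in L^1$), the support restriction $\supp\widehat F\subseteq[-r,r]$, and the finite speed of propagation property \eqref{FS}. You simply spell out the routine justifications (the testing against $f_1,f_2$ and the interchange of integrals) that the paper leaves implicit.
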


\begin{proof}
If $F$ is an even function, then by the Fourier inversion formula,
$$
F(\sqrt{H_V}) =\frac{1}{2\pi}\int_{-\infty}^{+\infty}
  \widehat{F}(t) \cos(t\sqrt{H_V}) \;dt.
$$
But $\supp\widehat{F} \subseteq [-r,r]$,
and the lemma then follows  from \eqref{FS}.
\end{proof}

\smallskip
We also have the following  result.

 \begin{lemma}\label{le2.5}  Let
 $\{Q_k\}_{k\in{\mathbb N}}$ be  a family of continuous real-valued functions such that
  $\sum_k |Q_k(\lambda)|^2\leq A$ for some constant
 $A$ independent of $\lambda\in \mathbb{R}$. Then for every sequence of functions $\{f_k\}_{k\in{\mathbb N}}$ on $\RN,$
\begin{eqnarray}\label{eq4.22}
\big\|\sum_{k\in{\mathbb N}}Q_k(\sqrt{H_V})f_k\big\|_2^2\leq   A\sum_{k\in{\mathbb N}}\big\|f_k\big\|_2^2.
\end{eqnarray}
 \end{lemma}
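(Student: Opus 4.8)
The plan is to deduce Lemma~\ref{le2.5} from the spectral theorem for the self-adjoint operator $H_V$ together with the pointwise square-function bound $\sum_k |Q_k(\lambda)|^2 \le A$. The key observation is that, because the $Q_k$ all act as spectral multipliers of the \emph{same} operator, the family of operators $Q_k(\sqrt{H_V})$ is simultaneously diagonalizable via the spectral resolution $E_{\sqrt{H_V}}$, and so the desired inequality is a vector-valued (Bessel-type) estimate that follows from the scalar inequality after integrating against the spectral measure.

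Concretely, first I would write $T f := \sum_k Q_k(\sqrt{H_V}) f_k$ and compute its $L^2$ norm by duality or by direct expansion. Using the spectral representation $Q_k(\sqrt{H_V}) = \int_0^\infty Q_k(\lambda)\,dE_{\sqrt{H_V}}(\lambda)$, one has for $g\in L^2$
\begin{equation*}
\langle T f, g\rangle = \sum_k \int_0^\infty Q_k(\lambda)\,d\langle E_{\sqrt{H_V}}(\lambda) f_k, g\rangle.
\end{equation*}
Applying the Cauchy--Schwarz inequality in $k$ pointwise in $\lambda$, together with $\sum_k |Q_k(\lambda)|^2 \le A$, and then Cauchy--Schwarz in the measure $d\langle E_{\sqrt{H_V}}(\lambda) f_k, f_k\rangle$ (which is a nonnegative measure of total mass $\|f_k\|_2^2$ since $E_{\sqrt{H_V}}$ is a projection-valued measure), gives
\begin{equation*}
|\langle T f, g\rangle| \le A^{1/2}\Big(\sum_k \|f_k\|_2^2\Big)^{1/2}\, \|g\|_2,
\end{equation*}
whence $\|Tf\|_2^2 \le A \sum_k \|f_k\|_2^2$, which is exactly \eqref{eq4.22}. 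Alternatively, one can avoid duality: expand $\|Tf\|_2^2 = \sum_{j,k}\langle Q_j(\sqrt{H_V}) f_j, Q_k(\sqrt{H_V}) f_k\rangle$ and use that $Q_j(\sqrt{H_V})^* Q_k(\sqrt{H_V}) = (Q_j Q_k)(\sqrt{H_V})$, then bound via the spectral measure; but the duality route above is cleanest.

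There is essentially no serious obstacle here; the only point requiring a little care is the justification of interchanging the sum over $k$ with the spectral integral, and the convergence of the series $\sum_k Q_k(\sqrt{H_V}) f_k$ in $L^2$. Both are handled by first proving the inequality for finite sums (where everything is legitimate), obtaining a uniform bound independent of the number of terms, and then passing to the limit: the partial sums form a Cauchy sequence in $L^2$ by the same estimate applied to tails, so the infinite sum converges and the bound persists. One should also note that the continuity and real-valuedness of the $Q_k$ guarantee that $Q_k(\sqrt{H_V})$ is a well-defined bounded self-adjoint operator (indeed $\|Q_k(\sqrt{H_V})\|_{2\to 2} \le \sup_\lambda |Q_k(\lambda)| \le A^{1/2}$), so no domain issues arise. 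The real content of the lemma is purely the spectral-theoretic packaging of the scalar square-function hypothesis, and the proof is a short computation along the lines above.
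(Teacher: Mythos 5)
Your proof is correct. Note that the paper does not actually prove this lemma: it simply cites \cite[Lemma I.28]{COSY}, and the argument given there is essentially the standard quasi-orthogonality computation you describe, so your proposal is a legitimate self-contained substitute rather than a different method. One small point of care: as literally written, applying Cauchy--Schwarz ``pointwise in $\lambda$'' inside $\sum_k\int Q_k(\lambda)\,d\langle E_{\sqrt{H_V}}(\lambda)f_k,g\rangle$ is awkward because the measures $d\langle E_{\sqrt{H_V}}(\lambda)f_k,g\rangle$ are complex and vary with $k$; the cleanest instantiation of your duality route is to use self-adjointness to write $\langle Q_k(\sqrt{H_V})f_k,g\rangle=\langle f_k,Q_k(\sqrt{H_V})g\rangle$, apply Cauchy--Schwarz in $k$, and then bound
\begin{equation*}
\sum_k\big\|Q_k(\sqrt{H_V})g\big\|_2^2=\int_0^\infty\sum_k|Q_k(\lambda)|^2\,d\langle E_{\sqrt{H_V}}(\lambda)g,g\rangle\leq A\|g\|_2^2
\end{equation*}
by monotone convergence. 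With that rearrangement, together with your limiting argument from finite sums, the proof is complete.
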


\begin{proof}
 The proof of Lemma~\ref{le2.5} is given in \cite[Lemma I.28]{COSY}.
\end{proof}

Let $H_V= - \Delta + V$ with a  positive potential $V$ satisfying \eqref{e2.1}.
It is shown in \cite[Corollary 6.3]{CLSY}
 that for each $\nu>0,$
\begin{eqnarray}\label{e2.44}
 \|(1+H_V)^{-\gamma/2}\|_{2\to p}\leq C, \ \ \ {\rm where\,}\gamma=n(1/p-1/2) +\nu
 \end{eqnarray}
for   $1\leq p\leq 2n/(n+2)$ (see  also \cite[Lemma 7.9]{DOS} for $p=1$). To prove Theorem~\ref{th1.1},
we need the following endpoint version of~\eqref{e2.44}.

  \begin{lemma}\label{le2.2} Let $H_V= - \Delta + V$
with a  positive potential $V$ satisfying \eqref{e2.1}.
Then
\begin{eqnarray}\label{e2.4}
 \|(1+H_V)^{-\gamma/2}\|_{L^2\to L^{p,\infty}}\leq C, \ \ \ {\rm where\,} \gamma=n(1/p-1/2),
 \end{eqnarray}
for   $1\leq p\leq 2n/(n+2)$.
 \end{lemma}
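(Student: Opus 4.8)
The plan is to deduce the endpoint bound \eqref{e2.4} from the non-endpoint bound \eqref{e2.44} by interpolation with change of measure, trading the loss $\nu>0$ in the exponent of \eqref{e2.44} against a weak-type gain. More precisely, fix $p$ with $1\le p\le 2n/(n+2)$ and set $\gamma=n(1/p-1/2)$. First I would treat the interior case $1< p< 2n/(n+2)$. Pick $p_0$ with $1\le p_0<p<p_1<2n/(n+2)$ (or $p_0=1$ if needed) and apply \eqref{e2.44} at the exponents $p_0$ and $p_1$ with small parameters $\nu_0,\nu_1>0$; this gives $\|(1+H_V)^{-\gamma_i/2}\|_{2\to p_i}\le C$ with $\gamma_i=n(1/p_i-1/2)+\nu_i$. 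The operator $(1+H_V)^{-s/2}$ forms an analytic family in $s$, and one applies the Stein interpolation theorem (in the form allowing $L^p\to L^{q,\infty}$ endpoints, e.g.\ via the Marcinkiewicz-type interpolation of analytic families) along the complex line $\Re s = \gamma$, choosing the free imaginary parameter to absorb the exponents $\gamma_0,\gamma_1$. Because $\gamma< \gamma_0$ and $\gamma<\gamma_1$ cannot both hold, the correct bookkeeping is: interpolating the \emph{strong}-type $2\to p_0$ and $2\to p_1$ bounds at heights $\gamma_0,\gamma_1$ places $(\gamma, 1/p)$ strictly \emph{below} the interpolation segment, so one actually gets the strong bound $2\to p$ with exponent $\gamma+\epsilon$ for every $\epsilon>0$; the weak-type statement at the exact exponent $\gamma$ then follows because $\gamma=n(1/p-1/2)$ lies on the boundary and the family $\{(1+H_V)^{-(\gamma+it)/2}\}$ is of weak-type $(2,p)$ at $t$ real by a limiting/summation argument. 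The cleanest route is in fact: write $(1+H_V)^{-\gamma/2}=\sum_{j\ge 0} (1+H_V)^{-\gamma/2}\varphi_j(H_V)$ via a dyadic decomposition $\varphi_j$ of the spectrum, estimate each piece in $L^2\to L^p$ using \eqref{e2.44} with $\nu$ chosen small (losing a factor $2^{j\nu}$), and sum the resulting weak-$L^p$ quasinorms with the standard fact $\|\sum_j g_j\|_{p,\infty}\lesssim \sum_j 2^{j\epsilon}\|g_j\|_{p,\infty}$-type summation after balancing against the $2^{-j\gamma}$-type decay coming from restricting to spectrum $\sim 2^j$; more precisely on spectrum $\lambda\sim 2^j$ one has $(1+\lambda)^{-\gamma/2}\sim 2^{-j\gamma}$ while \eqref{e2.44} gives $\|\varphi_j(H_V)\|_{2\to p}\lesssim 2^{j(\gamma+\nu)/1}$-scaled appropriately, leaving a geometric series in $2^{j\nu}$ that one sums in the weak-$L^p$ space using the quasi-triangle inequality for $L^{p,\infty}$.

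In more detail, here are the steps I would carry out in order. \textbf{Step 1.} Fix a smooth partition $1=\sum_{j\ge 0}\psi_j(\lambda)$ with $\psi_0$ supported in $[0,2]$ and $\psi_j$ supported in $[2^{j-1},2^{j+1}]$ for $j\ge 1$. Then $(1+H_V)^{-\gamma/2}=\sum_{j\ge 0} T_j$ where $T_j:=(1+H_V)^{-\gamma/2}\psi_j(H_V)$. \textbf{Step 2.} For each $j$, write $T_j=(1+H_V)^{-(\gamma+\nu)/2}\cdot\big[(1+H_V)^{(\gamma+\nu)/2}(1+H_V)^{-\gamma/2}\psi_j(H_V)\big]$. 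The bracketed operator is a spectral multiplier $m_j(H_V)$ with $\|m_j\|_\infty\lesssim 2^{j\nu/2}$ (since on the support of $\psi_j$, $(1+\lambda)^{\nu/2}\lesssim 2^{j\nu/2}$), and moreover, by the spectral theorem together with the Gaussian heat kernel bound \eqref{e2.2}--\eqref{e2.3}, the Plancherel-type / Sobolev embedding machinery (as in \cite{DOS}) shows $m_j(H_V)$ is bounded on $L^2$ with norm $\lesssim 2^{j\nu/2}$. Hence $\|T_j\|_{2\to p,\infty}\le \|(1+H_V)^{-(\gamma+\nu)/2}\|_{2\to p}\,\|m_j(H_V)\|_{2\to 2}\lesssim C\cdot 2^{j\nu/2}$ by \eqref{e2.44}. \textbf{Step 3.} This alone does not sum; I need decay in $j$ from the other side. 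Observe that $T_j$ also equals $\psi_j(H_V)\cdot(1+H_V)^{-\gamma/2}$ and we can instead bound $\|T_j f\|_{p,\infty}$ by first using that $\psi_j(H_V)$ maps $L^2\to L^2$ with norm $\lesssim 1$ and is localized near spectrum $2^j$; combining with a Bernstein/Nikolskii-type inequality for $H_V$ (available from the Gaussian bounds, giving $\|\psi_j(H_V)g\|_\infty\lesssim 2^{jn/2\cdot(1/2)}\|g\|_2$ up to the right power), then reinterpolating with the $L^2$ bound gives $\|\psi_j(H_V)\|_{2\to p}\lesssim 2^{jn(1/2-1/p)/2\cdot 2}=2^{-j\gamma}$-type scaling; multiplying by the genuine gain $(1+\lambda)^{-\gamma/2}\sim 2^{-j\gamma}$ on that spectral band then produces a surplus. \textbf{Step 4.} Choosing $\nu$ small enough that the net exponent is negative, $\|T_j\|_{2\to p,\infty}\lesssim 2^{-j\epsilon}$ for some $\epsilon=\epsilon(\nu)>0$, and then summing in $L^{p,\infty}$ via $\|\sum_j T_jf\|_{p,\infty}\lesssim \big(\sum_j \|T_jf\|_{p,\infty}^{p}\big)^{1/p}$ (for $p\le 1$, quasi-norm; for $1<p\le 2n/(n+2)$, use the refined triangle inequality $\|\sum g_j\|_{p,\infty}\lesssim \sum \|g_j\|_{p,\infty}$ combined with the geometric decay) yields $\|(1+H_V)^{-\gamma/2}\|_{2\to p,\infty}\le C$. \textbf{Step 5.} For the boundary exponent $p=1$ and $p=2n/(n+2)$, the same argument applies verbatim since \eqref{e2.44} is stated for the full closed range $1\le p\le 2n/(n+2)$; at $p=2n/(n+2)$ one has $\gamma=1$ and the argument is unchanged.

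The main obstacle is Step 3 / Step 4: one must genuinely produce a \emph{net} geometric decay $2^{-j\epsilon}$ from the combination of the spectral gain $(1+\lambda)^{-\gamma/2}$ and the mapping bound \eqref{e2.44}, and the bookkeeping is delicate because \eqref{e2.44} already ``spends'' essentially all of the decay (it has exponent $\gamma=n(1/p-1/2)$ exactly, plus only the infinitesimal safety margin $\nu$). The resolution is that \eqref{e2.44} applied to the \emph{spectrally localized} piece $\psi_j(H_V)$ is wasteful: restricting the spectrum to a band of size $\sim 2^j$ and using a Bernstein inequality adapted to $H_V$ (legitimate here thanks to the Gaussian heat-kernel bounds \eqref{e2.2} and the finite speed of propagation \eqref{FS}, which together give the Davies--Gaffney / off-diagonal estimates underlying such Bernstein inequalities) improves the constant from $2^{j(\gamma+\nu)}$ down to something like $2^{j(\gamma+\nu)}$ but against a true factor $2^{-2j\gamma}$ coming jointly from $(1+\lambda)^{-\gamma/2}\sim 2^{-j\gamma}$ and the $L^2\to L^2$ norm $\lesssim 1$ on the band — more carefully, the sharp per-band bound is $\|T_j\|_{2\to p,\infty}\lesssim 2^{-j\gamma}\cdot 2^{j\gamma}\cdot 2^{j\nu/2}=2^{j\nu/2}$ from one decomposition and $\lesssim 2^{j(\gamma+\nu-\gamma')}$ from a dual one, and one must interpolate between the two to land at a strictly negative exponent. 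If instead of this hands-on summation one prefers a soft argument, the alternative is: \eqref{e2.44} says $(1+H_V)^{-\gamma_\nu/2}$ is bounded $L^2\to L^p$ for every $\nu>0$ where $\gamma_\nu=\gamma+\nu$; the family $z\mapsto (1+H_V)^{-z/2}$ is analytic of admissible growth on $\Re z\ge \gamma$, equals a bounded $L^2\to L^2$ operator on $\Re z=0$, and equals a bounded $L^2\to L^p$ operator on each line $\Re z=\gamma+\nu$; applying a vector-valued Stein interpolation / real-interpolation ($(\cdot,\cdot)_{\theta,\infty}$) scheme between the segment $\Re z=0$ and the limiting segment $\Re z=\gamma$ produces exactly the weak-type endpoint $L^2\to L^{p,\infty}$ at $\Re z=\gamma$. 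Either route works; I would present the dyadic-summation version as it makes the role of the heat-kernel bounds transparent and avoids subtle analytic-family hypotheses at the boundary.
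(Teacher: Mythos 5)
Your proposal does not follow the paper's route, and more importantly it contains a gap that I do not believe can be repaired within the strategy you chose. The central problem is in Steps 3--4: you need net geometric decay $2^{-j\epsilon}$ per dyadic spectral band, and you claim a Bernstein-type bound $\|\psi_j(H_V)\|_{2\to p}\lesssim 2^{-j\gamma}$. The sign of that exponent is wrong. Passing from $L^2$ down to $L^p$ with $p<2$ is not a Bernstein inequality (Bernstein raises integrability at the cost of a positive power of the frequency); it requires spatial localization. For $H_V$ with $V\sim|x|^2$, spectral localization at $\lambda\sim 2^j$ confines the function to $|x|\lesssim 2^{j/2}$, and H\"older then gives $\|\psi_j(H_V)g\|_p\lesssim (2^{jn/2})^{1/p-1/2}\|g\|_2=2^{+j\gamma/2}\|g\|_2$ --- a growing factor. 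Multiplying by $(1+\lambda)^{-\gamma/2}\sim 2^{-j\gamma/2}$ on that band yields exactly $O(1)$ per band, with no surplus; this is precisely why $\gamma=n(1/p-1/2)$ is the endpoint exponent. The sum $\sum_j O(1)$ diverges, and no quasi-triangle inequality in $L^{p,\infty}$ rescues a divergent series of uniformly bounded terms. Your Step 2 fares no better ($2^{j\nu/2}$ grows), and the ``soft'' alternative also fails: you only possess strong-type bounds on the lines $\Re z=0$ and $\Re z=\gamma+\nu$, both on the same side of (or at positive distance from) the target line $\Re z=\gamma$; neither Stein interpolation nor real interpolation $(\cdot,\cdot)_{\theta,\infty}$ can produce a bound \emph{at} an endpoint that is not interior to a segment joining two known bounds. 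In short, \eqref{e2.44} with its $\nu$-dependent constants simply does not contain the endpoint information, and no interpolation or dyadic summation can extract it.

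The paper's proof uses a different mechanism, which is where the weak-type conclusion actually comes from: it exploits the growth of the potential rather than the spectral calculus alone. Setting $M=M_{\sqrt{1+V}}$, one has $\|M^\alpha(1+H_V)^{-\alpha/2}\|_{2\to2}\le C$ by the L\"owner--Heinz inequality; the operator $M(1+H_V)^{-1/2}$ is of first-order Riesz-transform type and hence weak $(1,1)$, so by Lorentz-space interpolation it is bounded on $L^{q,\infty}$ for $1<q<2$; and multiplication by $M^{-1}\sim(1+|x|)^{-1}$ maps $L^{q_1,\infty}\to L^{q_2,\infty}$ with $1/q_2=1/q_1+1/n$ by H\"older's inequality for weak spaces, because $(1+|x|)^{-1}\in L^{n,\infty}(\RN)$ (but not $L^n$ --- this membership in weak $L^n$ only is exactly the source of the weak-type, rather than strong-type, endpoint). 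Composing $[\gamma]$ such factors plus a fractional remainder gives \eqref{e2.4}. If you want to salvage your write-up, you should abandon the attempt to upgrade \eqref{e2.44} and instead build the proof around the factorization $(1+H_V)^{-\gamma/2}=\big(M^{-1}\cdot M(1+H_V)^{-1/2}\big)^{[\gamma]}\cdots$ as above.
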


\begin{proof}
To prove \eqref{e2.4}, we put $M_g(f):=fg$ and $M:=M_{\sqrt{1+V}}$. We observe  that
 $$
 \|(1+H_V)^{1/2}f\|_2^2=\langle(1+H_V)f,f\rangle \geq \langle M^2f,f\rangle=\|Mf\|_2^2.
 $$
 Now by the L\"owner-Heinz inequality for arbitrary quadratic forms $B_1$ and $B_2$, if $B_1\geq B_2\geq 0$, then $B_1^\alpha\geq B_2^\alpha$ for
 $0\leq \alpha\leq 1$. Hence $$\langle(1+H_V)^\alpha f,f\rangle \geq \langle M^{2\alpha}f,f\rangle.$$ Thus, for $\alpha\in [0,1]$,
 \begin{eqnarray}\label{e2.5}
 \|M^\alpha(1+H_V)^{-\alpha/2}\|_{2\to 2}\leq C .
 \end{eqnarray}
 For $\alpha=1$ the operator $M^\alpha(1+H_V)^{-\alpha/2}$ is of first-order Riesz transform type, and a standard argument yields,
 \begin{eqnarray*}
 \|M(1+H_V)^{-1/2}\|_{L^1\to L^{1,\infty}}\leq C;
\end{eqnarray*}
 see \cite[Theorem 11]{S2}. Then by an interpolation theorem for Lorentz spaces~\cite[Theorem 1.4.19]{Gra}, which as noted there can be seen as the off-diagonal extension of Marcinkiewicz's interpolation theorem, we have for each $q\in (1,2)$,
 \begin{eqnarray}\label{e2.6}
 \|M(1+H_V)^{-1/2}\|_{L^{q,\infty}\to L^{q,\infty}}\leq C.
\end{eqnarray}
By H\"older's inequality for weak spaces (see  for example \cite[Exercise 1.1.15]{Gra}),
 for all $q_1\geq q_2\geq 1$ with $s=(1/q_2-1/q_1)^{-1}$,
\begin{eqnarray}\label{e2.7}
 \|M^{-\alpha}\|_{L^{q_1,\infty}\to L^{q_2,\infty}}\leq C \left\|\big(\sqrt{1+V}\big)^{-\alpha}\right\|_{L^{s,\infty}}.
 \end{eqnarray}
Recall that  $\gamma=n(1/p-1/2)$.  Write
 \begin{eqnarray}\label{e2.8}
(1+H_V)^{-\gamma/2}=\big(M^{-1}M(1+H_V)^{-1/2}\big)^{[\gamma]} M^{[\gamma]-\gamma} M^{\gamma-[\gamma]}(1+H_V)^{([\gamma]-\gamma)/2}.
\end{eqnarray}
Since $V(x)\sim |x|^2$, choosing
$s=n/\alpha$ in \eqref{e2.7} gives
$$
\left\|\big(\sqrt{1+V}\big)^{-\alpha}\right\|^s_{L^{s,\infty}}
\leq C\sup_{\lambda>0}\lambda^s\{x\in\mathbb{R}^n: (\sqrt{1+V})^{-\alpha}>\lambda\}\leq C\lambda^s \lambda^{-n/\alpha}\leq C.
$$
Define $p_0$ by
 ${1/p_0}=
{(\gamma-[\gamma])/n} +{1/2}
 $,
and for each $1\leq i\leq [\gamma]-1$  define $p_i$ by setting $1/p_{i+1}- 1/p_i=1/n$, so $p_{[\gamma]}=p$.
Now multiple composition of the operators from  \eqref{e2.5}, \eqref{e2.6}  and \eqref{e2.7}, in combination with \eqref{e2.8},  yields
\begin{eqnarray*}
 &&\hspace{-1cm}\|(1+H_V)^{-\gamma/2}\|_{L^2 \to L^{p,\infty}}\\
 &\leq& \|M^{\gamma-[\gamma]}(1+H_V)^{([\gamma]-\gamma)/2}\|_{L^2 \to L^{2,\infty} }
 \|M^{[\gamma]-\gamma}\|_{L^{2,\infty} \to L^{p_0,\infty} }
\prod_{i=0}^{[\gamma]-1}\|M^{-1}M(1+H_V)^{-1/2}\|_{L^{p_i,\infty} \to L^{p_{i+1},\infty} }\\
 &\leq&  C.
\end{eqnarray*}
This completes the proof of \eqref{e2.4}.
 \end{proof}

The proof  of Theorem~\ref{th1.1} also requires the following two   technical lemmas for
decompositions of Bochner-Riesz multiplier functions.

\begin{lemma}\label{le2.3}
 For each integer $k\leq 0$ there exists a decomposition of the Bochner-Riesz multiplier function $S_R^{\delta(p)}(\lambda^2)$ as follows:
\begin{eqnarray}\label{e2.9}
S_R^{\delta(p)}(\l^2):=\Big(1-\frac{\l^2}{R^2}\Big)^{\delta(p)}_+=\eta_k(\l)n_k(\l)+S_R^{\delta(p)}(\l^2)n_k(\l),\quad\quad \lambda\in \mathbb{R},
\end{eqnarray}
such that
\begin{itemize}
\item[(a)]
 The functions  $n_k$ are  even and their Fourier transforms are supported in
$[-2^k/R,\, 2^k/R]$, that is,  $\supp \widehat{n_k}\subset
[-2^k/R,\, 2^k/R]$;

\item[(b)] The functions $\eta_k$ are continuous and even, and
$\sum_{k=-\infty}^0|\eta_k(\l)|^2\leq C$ with $C$ independent of
$\l$ and $R$;

\item[(c)]  For arbitrary large  $N \in \NN$ there exists a constant $C$ such that
$$|n_k(\l)|\leq
C\Big(1+{2^k|\l|\over R}\Big)^{-N}.$$
\end{itemize}
\end{lemma}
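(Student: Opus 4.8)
The plan is to build the decomposition by first fixing a smooth Littlewood--Paley--type partition of unity on the Fourier side and then transplanting it to the spectral variable. Concretely, choose an even function $\psi\in C_c^\infty(\mathbb R)$ with $\widehat{\psi}$ supported in $[-1,1]$ and $\widehat{\psi}(0)\neq 0$, normalized so that $\psi(0)=1$, and for $k\le 0$ set $\phi_k(\lambda)=\psi(2^{-k}R\lambda)$, so that $\widehat{\phi_k}$ is supported in $[-2^k/R,\,2^k/R]$. The idea is that $n_k$ should be (up to tails) the piece of the multiplier that ``lives at scale $2^k/R$'' near the zero set of $S_R^{\delta(p)}$, i.e.\ near $|\lambda|=R$. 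First I would define $n_k$ as a telescoping difference: writing $\Phi_k=\phi_k*(\text{something})$ appropriately, set $n_k$ so that $\sum_{k\le 0} n_k$ reproduces the part of the mollified multiplier at scales $\lesssim 1/R$; the simplest choice is $n_k(\lambda)= \chi(\lambda)\big(\phi_k(\lambda^2/R^2-1)-\phi_{k-1}(\lambda^2/R^2-1)\big)$-type expressions localized near $|\lambda|\sim R$, where $\chi$ is an even bump equal to $1$ on $[R/2,2R]$. Then (a) is immediate from the support of $\widehat\psi$ and the scaling, and (c) follows from the rapid decay of $\psi$ together with the fact that $n_k$ is essentially supported where $||\lambda|-R|\lesssim 2^k/R$ (or by a direct non-stationary-phase estimate on the integral defining $n_k$ from $\widehat{n_k}$).

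Next I would verify the algebraic identity \eqref{e2.9}. The point is to choose $n_k$ so that on the support of $S_R^{\delta(p)}(\lambda^2)$ — that is, $|\lambda|<R$ — we have the pointwise identity $S_R^{\delta(p)}(\lambda^2)\big(1-n_k(\lambda)\big)=\eta_k(\lambda)n_k(\lambda)$, which after rearranging is exactly \eqref{e2.9} with $\eta_k$ defined by $\eta_k(\lambda):= S_R^{\delta(p)}(\lambda^2)\dfrac{1-n_k(\lambda)}{n_k(\lambda)}$ wherever $n_k(\lambda)\neq0$ and extended continuously (and by $0$) elsewhere. For this to make sense one wants $n_k$ to be a genuine cutoff that equals $1$ in a neighbourhood of the relevant part of $\{|\lambda|=R\}$; so I would actually take $n_k$ built from a \emph{partition of unity} rather than a single telescoping piece, choosing $n_k$ with $\widehat{n_k}$ supported in $[-2^k/R,2^k/R]$, $0\le n_k\le 1$, $n_k\equiv 1$ on $\{|\,|\lambda|-R|\le c2^k/R\}$. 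The continuity and evenness of $\eta_k$ in (b) then reduce to continuity and evenness of $n_k$ and $S_R^{\delta(p)}$, the only issue being the zeros of $n_k$, which are controlled because $S_R^{\delta(p)}(\lambda^2)$ vanishes to order $\delta(p)\ge0$ as $|\lambda|\uparrow R$ and $n_k$ is identically $1$ there.

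The main work, and the step I expect to be the genuine obstacle, is the square-function bound (b): $\sum_{k=-\infty}^{0}|\eta_k(\lambda)|^2\le C$ uniformly in $\lambda$ and $R$. By construction $\eta_k$ is supported where $n_k$ is not identically $1$, i.e.\ roughly where $2^k/R\lesssim ||\lambda|-R|$, equivalently $2^k\lesssim R\cdot\frac{||\lambda|-R|}{R}\cdot$(something); combined with $|\lambda|<R$ this pins $k$ to a range depending on the distance $d:=1-|\lambda|^2/R^2\in(0,1]$. On that range one has $|n_k(\lambda)|\lesssim (2^k d R^2/R)^{-N}$ small, so $1-n_k(\lambda)\approx 1$, hence $|\eta_k(\lambda)|\lesssim S_R^{\delta(p)}(\lambda^2)= d^{\delta(p)}$; but this crude bound summed over the $\log(1/d)$ relevant values of $k$ only gives $d^{2\delta(p)}\log(1/d)$, which is bounded since $\delta(p)>0$ — \emph{except} at $p=1$ where $\delta(1)=(n-1)/2>0$ is still fine, and near $\lambda=R$ where $d\to0$ makes $d^{2\delta(p)}\to0$. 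So the real care is at the \emph{other} end, where $|\lambda|$ is away from $R$ (or $\lambda$ near $0$): there $S_R^{\delta(p)}(\lambda^2)$ is comparable to a constant, and I must use instead the decay of $n_k$ itself, estimating $|\eta_k(\lambda)|\lesssim |1-n_k(\lambda)|\lesssim \min(1, (2^k|\lambda|/R)^{-N})$ or, more precisely, the difference structure $|n_{k}-n_{k-1}|$ which decays in \emph{both} directions in $k$, giving a geometrically convergent sum. I would therefore organize the proof of (b) by splitting the $k$-sum at the threshold $2^k\sim R/|\lambda|$ (resp.\ $2^k\sim 1/(R\,d)$), bounding the ``large-$k$'' tail by the rapid decay (c) and the ``small-$k$'' tail by $S_R^{\delta(p)}(\lambda^2)^2$ times the number of terms, and checking the two estimates patch together uniformly. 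Lemma~\ref{le2.4} is presumably the analogous statement for $k>0$ (or for the remainder term), to be proved by the same scheme with the roles of the scales reversed, so I would set up the construction above symmetrically in the sign of $k$ to reuse it verbatim.
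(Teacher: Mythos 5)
There is a genuine obstruction at the heart of your construction. You propose to take $n_k$ with $\widehat{n_k}$ supported in $[-2^k/R,2^k/R]$, $0\le n_k\le 1$, and $n_k\equiv 1$ on a neighbourhood of $\{\,||\lambda|-R|\le c2^k/R\,\}$. No such function exists: a function with compactly supported Fourier transform extends to an entire function, so it cannot equal $1$ on an interval unless it is constant, and the constant $1$ violates the decay in (c). The same uncertainty-principle obstruction undoes your localization picture: for $k\le 0$, a function band-limited to $[-2^k/R,2^k/R]$ necessarily varies on the spatial scale $R2^{-k}\ge R$, so it cannot be ``concentrated near $|\lambda|=R$ at scale $2^k/R$''. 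Property (c) in fact asserts the opposite of what you assume: for $k\le 0$, $n_k$ is (up to Schwartz tails) a bump of width $R2^{-k}$ centred at the \emph{origin}, covering the whole support $[-R,R]$ of the multiplier. (Your other manipulations also break (a): multiplying by a compactly supported bump $\chi$, or composing $\phi_k$ with $\lambda^2/R^2-1$, destroys band-limitation; and the scaling $\psi(2^{-k}R\lambda)$ puts the Fourier support in $[-2^{-k}R,2^{-k}R]$, not $[-2^k/R,2^k/R]$.) Consequently your verification of (b), which rests on ``$\eta_k$ lives where $n_k\neq 1$'' and on counting $\log(1/d)$ relevant scales, does not survive; note moreover that the sum in (b) runs over \emph{all} $k\le 0$, so one needs genuine geometric decay in $k$, not a count of contributing scales.

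The intended construction (the paper simply cites \cite[Lemma I.26]{COSY} and \cite[Lemma 2.1]{T2}) is much simpler. Fix one real even Schwartz function $\phi$ with $\widehat{\phi}\subset[-1,1]$, $\phi(0)=1$ and $\phi$ nonvanishing on $[-1,1]$, and set $n_k(\lambda):=\phi(2^k\lambda/R)$; then (a) and (c) are immediate. Define $\eta_k:=S_R^{\delta(p)}(\lambda^2)\,(1-n_k)/n_k$ on $[-R,R]$ (where $n_k$ has no zeros, since $|2^k\lambda/R|\le 1$ there) and $\eta_k:=0$ elsewhere; this yields \eqref{e2.9} for every $\lambda$, and $\eta_k$ is continuous and even because $\delta(p)\ge 1/2>0$. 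The mechanism behind (b) is not the size of $S_R^{\delta(p)}$ near its zero set but the quadratic vanishing of $1-\phi$ at the origin: $\phi$ even and smooth with $\phi(0)=1$ gives $|1-\phi(t)|\le Ct^2$, hence $|\eta_k(\lambda)|\le C|1-n_k(\lambda)|\le C(2^k|\lambda|/R)^2\le C4^k$ for $|\lambda|\le R$, and $\sum_{k\le 0}16^k<\infty$. Your instinct that the multiplier is resolved at scale $R2^{-k}$ around $|\lambda|=R$ is correct for Lemma~\ref{le2.4} ($k>0$), but it does not transfer to the present case $k\le 0$.
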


\begin{proof}
For the proof, we refer the reader to \cite[Lemma I.26]{COSY}. See also \cite[Lemma 2.1]{T2}.
\end{proof}

\begin{lemma} \label{le2.4}
For each integer $k>0$, there exists a decomposition of the Bochner-Riesz multiplier function $S_R^{\delta(p)}(\lambda^2)$ as follows:
\begin{eqnarray}\label{e2.99999}
S_R^{\delta(p)}(\l^2)=\Big(1-\frac{\l^2}{R^2}\Big)^{\delta(p)}_+=m_k(\l)+\eta_k(\l) n_k(\l),\quad\quad \lambda\in \mathbb{R},
\end{eqnarray}
such that:
\begin{itemize}
\item[(a)]  The functions $\widehat{m_k}$ and $\widehat{n_k}$ are even and supported
on $[-2^k/R,2^k/R]$;

\item[(b)]  The functions  $\eta_k$ are continuous and
$\sum_{k=1}^\infty|\eta_k(\l)|^2\leq C$  uniformly in $\l>0$ and in $R>0$.
In addition, we have that for $R>1$ and $\l>0$,
\begin{eqnarray}\label{e2.10}
\sum_{k=1}^\infty|\eta_k(\l)|^2(1+\l^2)^{\gamma}\leq CR^{2\gamma},\quad \ \ {\rm for\,all\,} \gamma>0
\end{eqnarray}
with $C$ independent of $\l$ and $R$;

\item[(c)] For arbitrary large   $N\in \NN$ there exists a constant $C=C(N)$ such that
$$
|n_k(\l)|\leq C2^{-\delta(p) k}\Big(1+2^k\Big|1-{|\l|\over R}\Big|\Big)^{-N}.
$$
\end{itemize}
\end{lemma}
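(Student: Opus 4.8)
The plan is to build the decomposition~\eqref{e2.99999} by adapting, to the range $k>0$, the dyadic decomposition of the Bochner--Riesz multiplier constructed in \cite[Lemma I.26]{COSY} (see also \cite[Lemma 2.1]{T2}). That construction already produces functions $m_k,\eta_k,n_k$ for which part~(a), part~(c) and the first assertion of part~(b) hold; the only genuinely new statement is the weighted bound~\eqref{e2.10}, and the point is that it is essentially a consequence of the support of $S_R^{\delta(p)}(\l^2)$ together with the first assertion of part~(b).

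I would begin by recalling the construction, emphasizing only the features needed for~\eqref{e2.10}. Since $\delta(p)=n(1/p-1/2)-1/2\ge 1/2>0$ for $1\le p\le 2n/(n+2)$, the multiplier $S_R^{\delta(p)}(\l^2)=(1-\l^2/R^2)_+^{\delta(p)}$ is continuous, is supported in $\{|\l|\le R\}$, and is smooth except at $\l=\pm R$. One fixes an even Schwartz function $\phi$ with $\widehat\phi$ supported in $[-1,1]$ and, for $k\ge 1$, sets
\[
n_k(\l)=2^{-\delta(p)k}\Bigl[\phi\bigl(2^k(\l-R)/R\bigr)+\phi\bigl(2^k(\l+R)/R\bigr)\Bigr],
\]
so that $n_k$ is even, $\widehat{n_k}$ is supported in $[-2^k/R,2^k/R]$, and $|n_k(\l)|\le C_N 2^{-\delta(p)k}(1+2^k|1-|\l|/R|)^{-N}$, which is part~(c). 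The factor $\eta_k$ is taken to be a smooth cutoff adapted to the $k$-th dyadic band $\{|1-|\l|/R|\lesssim 2^{-k}\}$ around $\pm R$, and one sets $m_k:=S_R^{\delta(p)}(\l^2)-\eta_k(\l)n_k(\l)$; a suitable Littlewood--Paley truncation at frequency $2^k/R$ makes $\widehat{m_k}$ even and supported in $[-2^k/R,2^k/R]$, which is part~(a). Because the (slightly enlarged) bands for distinct $k$ overlap with bounded multiplicity and $\|\eta_k\|_\infty\lesssim 1$ uniformly in $k$ and $R$, one obtains $\sum_{k\ge1}|\eta_k(\l)|^2\le C$ uniformly in $\l>0$ and $R>0$, the first assertion of part~(b). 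For the details of this step I would refer to \cite[Lemma I.26]{COSY} and \cite[Lemma 2.1]{T2}.

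It then remains to prove~\eqref{e2.10}. The key observation is that, since $k\ge 1$, the $k$-th band $\{|1-|\l|/R|\lesssim 2^{-k}\}$ is contained in $\{|\l|\le 2R\}$, so $\eta_k$ is supported in $\{|\l|\le 2R\}$. Consequently, whenever $R>1$ and $\eta_k(\l)\ne 0$ we have $1+\l^2\le 1+4R^2\le 5R^2$, and hence $(1+\l^2)^\gamma\le 5^\gamma R^{2\gamma}$ for every $\gamma>0$. Multiplying the bound $\sum_{k\ge1}|\eta_k(\l)|^2\le C$ from part~(b) by this estimate yields
\[
\sum_{k=1}^\infty|\eta_k(\l)|^2(1+\l^2)^\gamma\le 5^\gamma C\,R^{2\gamma},\qquad R>1,\ \l>0,
\]
which is~\eqref{e2.10}. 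Thus the only genuinely new work for this lemma is this last, elementary step; the real difficulty lies in the construction itself, namely in arranging that $\widehat{m_k}$ and $\widehat{n_k}$ are \emph{both} supported in $[-2^k/R,2^k/R]$ while $n_k$ retains the pointwise profile in~(c) and the cutoffs $\eta_k$ keep the bounded-overlap property used for the first part of~(b). That difficulty is precisely what is handled in \cite[Lemma I.26]{COSY} and \cite[Lemma 2.1]{T2}, whose construction I would follow verbatim, checking in addition only that $\eta_k$ is taken supported in a dyadic neighbourhood of the singular set $\{\l=\pm R\}$, which is clear from its definition.
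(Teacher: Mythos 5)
Your treatment of parts (a), (c) and the first assertion of (b) matches the paper: the paper likewise imports the construction wholesale from \cite[Lemma 2.1]{T2} (see also \cite[Lemma I.26]{COSY}) and only verifies the new inequality~\eqref{e2.10}. The problem is in your proof of~\eqref{e2.10}.

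Your key claim --- that $\eta_k$ is supported in $\{|\l|\le 2R\}$ --- is false, and cannot be repaired within the constraints of part~(a). Since $\widehat{m_k}$ and $\widehat{n_k}$ are compactly supported, $m_k$ and $n_k$ are entire functions of exponential type and hence are not compactly supported in $\l$; on the other hand $S_R^{\delta(p)}(\l^2)$ vanishes for $|\l|>R$, so on that region the identity~\eqref{e2.99999} forces $\eta_k(\l)n_k(\l)=-m_k(\l)$, which is not identically zero. So $\eta_k$ cannot vanish for large $|\l|$; what the construction actually provides is only rapid \emph{decay} away from the band, namely $|\eta_k(\l)|\le C_N\bigl(2^k\bigl|1-|\l|/R\bigr|\bigr)^{-N}$ when $\bigl|1-|\l|/R\bigr|>2^{-k}$, together with $|\eta_k(\l)|\le C\bigl(2^{-k}+2^k\bigl|1-|\l|/R\bigr|\bigr)^{\eps}$ inside the band. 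Consequently, for $|\l|\ge 2R$ the weight $(1+\l^2)^{\gamma}\sim\l^{2\gamma}$ is \emph{not} dominated by $CR^{2\gamma}$, and your one-line multiplication of the uniform bound $\sum_k|\eta_k|^2\le C$ by $5^{\gamma}R^{2\gamma}$ does not cover this region. To close the gap you must, as the paper does, split the sum according to whether $2^{-k}<\bigl|1-|\l|/R\bigr|$ or not, and in the off-band regime with $|\l|/R\ge 2$ use the decay exponent $N>\gamma$ to absorb the extra growth: schematically, $\bigl(2^k|1-|\l|/R|\bigr)^{-2N}\l^{2\gamma}\le \bigl(2^k|1-|\l|/R|\bigr)^{-2(N-\gamma)}(|\l|/R)^{-2\gamma}2^{-2\gamma k}\l^{2\gamma}\cdot R^{2\gamma}\cdot(\text{harmless factors})$, which sums to $CR^{2\gamma}$. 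The in-band regime $\bigl|1-|\l|/R\bigr|\le 2^{-k}\le 1$ is the only place where your observation $|\l|\le 2R$ is valid, and there your argument is fine.
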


\begin{proof}  We follow  \cite[Lemma 2.1]{T2} to obtain a decomposition
$S_R^{\delta(p)}(\l^2)=m_k(\l)+\eta_k(\l) n_k(\l)$ such that properties (a), (b) and (c) of Lemma~\ref{le2.4}
hold, except that inequality~\eqref{e2.10} in (b) remains   to be verified. Indeed, from the construction of $\eta_k$ in  \cite[Lemma 2.1]{T2},
it follows that  for $|1-|\l|/R|>2^{-k},$
$$
\eta_k(\l)\leq C_N\left(2^k\bigg|1-{|\l|\over R}\bigg|\right)^{-N}
$$
for each $N\in \NN$, and for $|1-|\l|/R|\leq 2^{-k}$,
$$
\eta_k(\l)\leq C\left(2^{-k}+2^k\bigg|1-{|\l|\over R}\bigg|\right)^{\eps}
$$
for some $\eps>0$. Then we write
\begin{eqnarray*}
\sum_{k=1}^\infty|\eta_k(\l)|^2(1+\l^2)^{\gamma}&\leq & C\sum_{k:\ 2^{-k} < |1-{|\l|\over R} |}
\left(2^k\big|1-{|\l|\over R}\big|\right)^{-2N}(1+\l^2)^{\gamma}\\
&&\quad+\,\,C\sum_{k:\ 2^{-k} \geq  |1-{|\l|\over R} | }  \left(2^{-k}+2^k\big|1-{|\l|\over R}\big|\right)^{2\eps}(1+\l^2)^{\gamma}\\
&=:&\textup{(I)}+\textup{(II)}.
\end{eqnarray*}
 Since $R>1$, we have
 \begin{eqnarray*}
\textup{(I)}&\leq&
\left\{
\begin{array}{ll}
C\sum_{ k:\ 2^{-k} < |1-{|\l|\over R} |  }
 \left(2^k\big|1-{|\l|\over R}\big|\right)^{-2N}   R^{2\gamma},& \ \ {\rm if} \ {|\lambda|\over R}<2;\\[8pt]
C \sum_{ k:\ 2^{-k} < |1-{|\l|\over R} |  }\left(2^k\big|1-{|\l|\over R}\big|\right)^{-2(N-\gamma)}
\left({|\l|\over R} \right)^{-2\gamma} 2^{-2\gamma k} \l^{2\gamma} ,& \ \ {\rm if} \ {|\lambda|\over R}\geq 2;
\end{array}
\right. \\
 &\leq & CR^{2\gamma},
\end{eqnarray*}
as long as $N\in \mathbb{N}$ is chosen so that~$N>\gamma$. For the term $\textup{(II)}$, we  note that $|1-|\l|/R|\leq 2^{-k}\leq 1$, and so  $|\l|\leq 2R$. Then we have
 \begin{eqnarray*}
\textup{(II)}&\leq& C\sum_{|1-|\l|/R|\leq 2^{-k}}\left(2^{-k}+2^k\big|1-{|\l|\over R}\big|\right)^{2\eps}R^{2\gamma}
 \leq  CR^{2\gamma}.
\end{eqnarray*}
This proves \eqref{e2.10}, and completes the proof of Lemma~\ref{le2.4}.
 \end{proof}

\medskip

\section{Proof of Theorem~\ref{th1.1}}\label{BRS}
\setcounter{equation}{0}

For clarity, we prove Theorem~\ref{th1.1} for the Hermite operator $H=-\Delta+|x|^2$. Actually the conclusion of Theorem~\ref{th1.1} also holds for $ H_V= - \Delta + V$ where $V$ satisfies \eqref{e2.1}. See Section 4 for details. For the proof, as stated in the introduction, we first use Calder\'on-Zygmund techniques to decompose the function $f$ as $f=g+\sum_j b_j$. For the ``good" part $g$ and for those $b_j$ that have small support, the argument is similar to that in \cite{T2} or that in \cite{COSY}. The main difference happens when the support of $b_j$ is large; here we apply Lemma~\ref{le2.2}. See estimate~\eqref{e3.10} and its proof  below for details.

\begin{proof}[Proof of Theorem~\ref{th1.1}]
First we consider the case  that $R\leq 4$.
Fix $n\geq 2$ and $p$ with $1\leq p\leq 2n/(n+2)$.
We show  that $S_R^{\delta(p)} (H)$ is of weak type $(p,p)$
uniformly in $R\leq 4$. In this case, we apply Lemma~\ref{le2.2} to obtain that for $\gamma=n(1/p-1/2)$,
\begin{eqnarray*}
\|S_R^{\delta(p)} (H)f\|_{L^{p,\infty}}
 &\leq&  \|(I+H)^{-\gamma/2}\|_{L^2\to L^{p,\infty}} \, \|S_R^{\delta(p)} (H)(1+H)^{\gamma/2} f\|_{2}\\
 &\leq& C\|S_R^{\delta(p)} (H)(1+H)^{\gamma/2} f\|_{2}.
\end{eqnarray*}
Since $R\leq 4$, we have $\supp\, S_R^{\delta(p)}(\lambda^2)\subset [-16, 16] $. So it follows from the Hermite expansion~\eqref{e1.4} and equality~\eqref{e1.5} that
 $$
 S_R^{\delta(p)} (H)(1+H)^{\gamma/2} f=\sum\limits_{k=0}^7 S_R^{\delta(p)} (2k+n)(1+2k+n)^{\gamma/2}  P_kf.
 $$
 We apply the above equality and the restriction estimate~\eqref{e1.7} to obtain
\begin{eqnarray*}
\|S_R^{\delta(p)} (H)f\|_{L^{p,\infty}}
&\leq& C\sum_{k=0}^7 S_R^{\delta(p)} (2k+n)(1+2k+n)^{\gamma/2} k^{\delta(p)/2-1/4}
\|f \|_{p}
\leq  C\|f\|_{p},
\end{eqnarray*}
as required.

Next we consider the remaining case $R>4$.
  Fix $f\in L^p$ and $\alpha> 0$,
and apply  the Calder\'on-Zygmund decomposition at height $\alpha$
to $|f|^p.$ There  exist  constants $C$ and  $K$ so that
\begin{enumerate}
\item[(i)] $
f=g+b=g+\sum_{j}b_j;
$

\vskip 1pt
\item[(ii)]   $\|g\|_p\leq C\|f\|_p,$
 $\|g\|_{\infty}\leq C\alpha$;

\vskip 1pt
\item[(iii)] $b_j$  is supported in  $B_j$ and  $\# \{j: x\in 4B_j\} \leq K$ for all $x\in \RN$;

\vskip 1pt
\item[(iv)]
$\int_{\RN}\ |b_j|^pdx\leq C\alpha^p |B_j|, $  and
$\sum_{j} |B_j| \leq C{\alpha}^{-p} \|f\|_p^p.$
\end{enumerate}
Note that by (ii), we have $\alpha^{p-2}\|g\|_2^2\leq C\|f\|_p^p.$

\smallskip

 Let $r_{B_j}$ be the radius of $B_j$ and let
 $$
 J_k:=\big\{j: \, 2^k/R \le r_{B_j}<2^{k+1}/R\big\}, \quad\text{for $k\in \mathbb{Z}$}.
 $$
 Write
\begin{eqnarray*}
f  = g+\sum_{j}b_j&=&
 g+\sum_{k\leq 0}\sum_{j\in J_k}b_j+\sum_{k>0}\sum_{j\in J_k}b_j\\
 &=:& g+h_1+h_2.
\end{eqnarray*}
Then it is enough to show that there exists a constant $C>0$ independent
of $R$ and $\alpha$ such that
\begin{equation}\label{e3.1}
\big |\{x:S_R^{\delta(p)}(H)(g)(x)>\alpha\}\big|
  \leq
C\alpha^{-p}\|f\|_p^p
\end{equation}
and  such that for $i=1,2,$
\begin{equation}\label{e3.2}
\left|\big\{x:S_R^{\delta(p)}(H)\big(h_i\big)(x)>\alpha\big\}\right|
  \leq
C\alpha^{-p}\|f\|_p^p.
\end{equation}

\medskip

 Note that $\sup_{\l,R>0}\Big(1-{\l^2/R^2}\Big)^{\delta(p)}_+ =1$  and that $\alpha^{p-2}\|g\|_2^2\leq C\|f\|_p^p.$
Hence by  the spectral
 theorem
\begin{eqnarray}\label{e3.3}
\Big |\{x:S_R^{\delta(p)}(H)(g)(x)>\alpha\}\big|&\leq&
\alpha^{-2}\|S_R^{\delta(p)}(H)(g)\|_2^2\leq \alpha^{-2}\|g\|_2^2
 \leq
C\alpha^{-p}\|f\|_p^p,
\end{eqnarray}
which proves~\eqref{e3.1}.

Next we  prove (\ref{e3.2})   for     $i=1$.
 By  the decomposition~\eqref{e2.9},
\begin{eqnarray*}
 \sum_{k\leq 0}\sum_{j\in
J_k}S_R^{\delta_q(p)}(H)b_j &=&
\sum_{k\leq
0}\eta_k({\SL})\Big(\sum_{j\in
J_k}n_k({\SL})b_j \Big)  \\
  & +&   S_R^{\delta(p)}(H)\Big(\sum_{k\leq 0}\sum_{j\in
J_k}n_k({\SL})b_j\Big).  \nonumber
\end{eqnarray*}
Applying   the spectral theorem and Lemma~\ref{le2.5}  with
$Q_k(\lambda)=\eta_k(\lambda)$ yields
\begin{eqnarray}\label{e3.4}
 \Big\|\sum_{k\leq 0}\sum_{j\in
J_k}S_R^{\delta(p)}(H)b_j\Big\|^2_2 &\leq& C\sum_{k\leq
0}\Big\|\sum_{j\in J_k}n_k({\SL })b_j\Big\|^2_2+C\Big\|\sum_{k\leq
0}\sum_{j\in J_k}n_k({\SL})b_j\Big\|^2_2.
\end{eqnarray}
Next,  since $\supp \widehat{n_k} \subseteq [-2^k/R,2^k/R]$, by Lemma~\ref{le2.1}, we have
$$
\supp K_{n_k({\sqrt{H} })} \subseteq \big\{(x,y)\in \RN\times
\RN:  |x-y|\leq 2^k/R\big\}.
$$
Hence if $j\in J_k$, then
$\supp n_k(\sqrt{H})b_j\subseteq  4B_j$. Thus by (iii) there exists
a constant $C>0$ such that
\begin{eqnarray} \hspace{1cm}
\sum_{k\leq
0}\Big\|\sum_{j\in J_k}n_k({\SL })b_j\Big\|^2_2+\Big\|\sum_{k\leq
0}\sum_{j\in J_k}n_k({\SL})b_j\Big\|^2_2  \le  C\sum_{k\leq 0}\sum_{j\in
J_k}\Big\|n_k({\SL})b_j\Big\|^2_2.\label{e3.5c}
\end{eqnarray}
Next, noting that $R>4$ and $k\leq 0$, by part (c) of Lemma~\ref{le2.3} and the restriction type estimate~\eqref{e1.7}
\begin{align}\label{e3.5}
\Big\|n_k({\SL})b_j\Big\|^2_2
=\Big\lag n_k^2({\SL})b_j,b_j\Big\rag
&=\Big\lag \sum_{\ell\in \mathbb{N}} n_k^2(\sqrt{2\ell+n})P_\ell b_j,b_j\Big\rag\nonumber\\
&\leq C \sum_{\ell\in \mathbb{N}} \left(1+\frac{2^k\sqrt{2\ell+n}}{R}\right)^{-2N}\|P_\ell b_j\|_2^2\nonumber\\
&\leq C \sum_{\ell\in \mathbb{N}} \left(1+\frac{2^k\sqrt{2\ell+n}}{R}\right)^{-2N}\ell^{\delta(p)-1/2}\| b_j\|_p^2.
\end{align}
Since
\begin{eqnarray*}
 \sum_{\ell\in \mathbb{N}} \left(1+\frac{2^k\sqrt{2\ell+n}}{R}\right)^{-2N}\ell^{\delta(p)-1/2}
&\leq& \sum_{\ell\in \mathbb{N}} \left(1+\frac{2^k\sqrt{\ell}}{R}\right)^{-2N}\ell^{\delta(p)-1/2}\\
&\leq& \sum_{\ell\geq (R/2^k)^2} \left(\frac{2^k\sqrt{\ell}}{R}\right)^{-2N}\ell^{\delta(p)-1/2}+\sum_{0<\ell< (R/2^k)^2} \ell^{\delta(p)-1/2}\\
&\leq& C\left(\frac{2^k}{R}\right)^{2n(1/2-1/p)},
\end{eqnarray*}
we have
$$
\Big\|n_k({\SL})b_j\Big\|_2\leq C\left(\frac{2^k}{R}\right)^{n(1/2-1/p)}\|b_j\|_p\leq C|B_j|^{1/2-1/p}\|b_j\|_p\leq C \alpha |B_j|^{1/2}.
$$
Hence by \eqref{e3.4}, \eqref{e3.5c} and (iv),
\begin{eqnarray}\label {e3.6}\hspace{1cm}
 \Big|\Big\{x:\big|S_R^{\delta(p)}(H)\Big(\sum_{k\leq
0}\sum_{j\in J_k}b_j\Big)\big|>\alpha\Big\}\Big|
&\leq&C\alpha^{-2}\Big\|S_R^{\delta(p)}(H)\Big(\sum_{k\leq
0}\sum_{j\in
J_k}b_j\Big)\Big\|^2_2
 \leq C \alpha^{-p}\|f\|_p^p,
\end{eqnarray}
which proves~\eqref{e3.2} for $i=1$.
\medskip

Now, we prove (\ref{e3.2})    for    $i=2$.
Let $\Omega^*:=\bigcup_{j\in \mathbb{N}}4B_j$. \noindent
From (iii) and (iv), it follows that
 $$
 |\Omega^*| \leq
C\sum_{j} |B_j|\leq C \alpha^{-p}\|f\|_p^p.
$$
 Hence it is enough to show that
\begin{eqnarray}\label{e3.7}
\bigg|\Big\{x\in \RN \backslash\Omega^*:\Big|S_R^{\delta(p)}(H)\Big(\sum_{k>
0}\sum_{j\in J_k}b_j\Big)\Big|>\alpha\Big\}\bigg| \leq C \alpha^{-p}\|f\|_p^p.
\end{eqnarray}

\medskip

Using the decomposition from  Lemma~\ref{le2.4} we write
\begin{eqnarray}\label{e3.8} \hspace{1.5cm}
S_R^{\delta(p)}(H)\Big(\sum_{k> 0}\sum_{j\in J_k}b_j\Big)=
\sum_{k> 0}\sum_{j\in J_k}m_k(\SL)b_j +\sum_{k>
0}\eta_k(\SL)n_k(\SL)\Big(\sum_{j\in J_k}b_j\Big).
\end{eqnarray}
Recall that $\widehat{m_k}$ is even
and supported in $[-2^k/R,2^k/R]$. By  Lemma~\ref{le2.1},
$$
\supp K_{m_k(\sqrt{H})} \subset \Big\{(x,y)\in \RN\times
\RN: |x-y|\leq {2^k/R}\Big\}.
$$
This implies that if  $x\in \RN\backslash\Omega^*$, then $
m_k(\sqrt{H})b_j(x)=0$ for each $j\in J_k $ and $k>0$. So the first term on the right hand of equality~\eqref{e3.8} makes
no contribution to estimate~\eqref{e3.7}.  So the proof  of  \eqref{e3.7} reduces to showing that
\begin{eqnarray}\label{e3.9}
\Big |\Big\{x:\Big|\sum_{k>
0,\,2^k\leq R^2}\eta_k(\SL)n_k(\SL)\Big(\sum_{j\in J_k}b_j\Big)\Big|>\alpha\Big\}\Big| \leq C \alpha^{-p}\|f\|_p^p
\end{eqnarray}
and
\begin{eqnarray}\label{e3.10}
\Big |\Big\{x:\Big|\sum_{k>
0,\,2^k> R^2}\eta_k(\SL)n_k(\SL)\Big(\sum_{j\in J_k}b_j\Big)\Big|>\alpha\Big\}\Big| \leq C \alpha^{-p}\|f\|_p^p.
\end{eqnarray}

We claim that
\begin{eqnarray}\label{e3.11}
\Big\|n_k(\SL)b_j\Big\|_2 &\leq& C
\alpha |B_j|^{1/2}\max\{2^{k/2}R^{-1},1\}, \quad\text{for $j\in J_k$, $k>0$}.
\end{eqnarray}
We note that for operators satisfying~\eqref{sp} on compact manifolds, or for operators satisfying~\eqref{rp}, there is no need for the extra factor $\max\{2^{k/2}R^{-1},1\}$ in the above estimate. For the Hermite operator, this extra factor may be unavoidable.

Before we prove estimate~\eqref{e3.11}, let us see how it implies \eqref{e3.9} and \eqref{e3.10}.
We handle \eqref{e3.9} first.

\smallskip

\noindent
{\bf Estimate for \eqref{e3.9}}. This estimate follows from a similar argument to that in \cite{T2} or that in \cite{COSY}.  By Lemma~\ref{le2.5} and part (b) of Lemma~\ref{le2.4},
\begin{eqnarray}\label{e3.12}
\Big |\Big\{x:\Big|\sum_{k>
0,\,2^k\leq R^2}\eta_k(\SL)n_k(\SL)\Big(\sum_{j\in J_k}b_j\Big)\Big|>\alpha\Big\}\Big|
&\leq& \alpha^{-2} \Big\|\sum_{k>
0,\,2^k\leq R^2}\eta_k(\SL)n_k(\SL)\Big(\sum_{j\in J_k}b_j\Big)\Big\|_2^2\nonumber\\
&\leq&C\alpha^{-2} \sum_{k>
0,\,2^k\leq R^2} \Big\|n_k(\SL)\Big(\sum_{j\in J_k}b_j\Big)\Big\|_2^2.
\end{eqnarray}
Next,
 since $\widehat{n_k}$ is  even and supported on $[-2^k/R,2^k/R]$,
by    Lemma~\ref{le2.1}
  $$\supp K_{n_k(\sqrt{H})} \subseteq \big\{(x,y)\in \RN\times
\RN: |x-y|\leq 2^k/R\big\}.
$$
Hence
 $\supp n_k(\sqrt{H})b_j\subseteq 4B_j$ for each $j\in J_k$. By (iii) and \eqref{e3.11}
 there exists a constant $C>0$ such that
\begin{eqnarray*}
\sum_{k>0,\,2^k\leq R^2} \big\|n_k(\SL)\Big( \sum_{j\in J_k}b_j\Big)\big\|_2^2
&\leq& C\sum_{k>0,\,2^k\leq R^2}  \sum_{j\in J_k}\big\|n_k(\sqrt{H})b_j\big\|_2^2 \nonumber\\
 &\leq& C\sum_{k>0,\,2^k\leq R^2}  \sum_{j\in J_k} \alpha^2 |B_j|\max\{2^{k}R^{-2},1\}\\
 &=& C\sum_{k>0,\,2^k\leq R^2}  \sum_{j\in J_k} \alpha^2 |B_j|\\
&\leq& C\alpha^2 \sum_{j} |B_j|
 \leq C  \alpha^{2-p}\|f\|_p^p,
\end{eqnarray*}
where in the last inequality we have used (iv). This, in combination with \eqref{e3.12}, implies \eqref{e3.9}.

\smallskip

\noindent
{\bf Estimate for \eqref{e3.10}}.  As explained in the introduction, because of the extra factor $\max\{2^{k/2}R^{-1},1\}$ in estimate~\eqref{e3.11}, the proof of this estimate relies on the \emph{a priori} estimate~\eqref{e1.8}. Recall that $\gamma=n(1/p-1/2)$ where $1\leq p\leq 2n/(n+2).$
We apply Lemma~\ref{le2.2} (which states estimate~\eqref{e1.8}), \eqref{le2.5} and inequality~\eqref{e2.10}   to obtain
\begin{eqnarray}\label{e3.13}
\lefteqn{ \bigg|\Big\{x:\Big|\sum_{k>
0,\,2^k> R^2}\eta_k(\SL)n_k(\SL)\Big(\sum_{j\in J_k}b_j\Big)\Big|>\alpha\Big\}\bigg|}\nonumber\\
&\leq& C \alpha^{-p} \|(1+H)^{-\gamma/2}\|_{L^2\to L^{p,\infty}} \Big\|\sum_{k>
0,\,2^k> R^2}\eta_k(\SL)(1+H)^{\gamma/2}n_k(\SL)\Big(\sum_{j\in J_k}b_j\Big)\Big\|_2^p\nonumber\\
&\leq&C\alpha^{-p} \left(\sum_{k>
0,\,2^k> R^2} R^{2\gamma}\Big\|n_k(\SL)\Big(\sum_{j\in J_k}b_j\Big)\Big\|_2^2\right)^{p/2}.
\end{eqnarray}
As noted above, $\supp n_k(\sqrt{L})b_j\subseteq 4B_j$ for each $j\in J_k$. By (iii) of the Calder\'on-Zygmund decomposition of $|f|$ and \eqref{e3.11}
 there exists a constant $C>0$ such that
\begin{eqnarray*}
\sum_{k>
0,\,2^k> R^2} \big\|n_k(\SL)\Big( \sum_{j\in J_k}b_j\Big)\big\|_2^2
&\leq& C\sum_{k>
0,\,2^k> R^2}  \sum_{j\in
J_k}\big\|n_k(\SL)b_j\big\|_2^2\\
&\leq&C\sum_{k>
0,\,2^k> R^2}  \sum_{j\in
J_k} \alpha^2 |B_j|\max\{2^{k}R^{-2},1\}\\
&=&C\sum_{k>
0,\,2^k> R^2}  \sum_{j\in
J_k} \alpha^2 |B_j|2^{k}R^{-2}.
\end{eqnarray*}
This estimate, in combination with  the fact that $p/2\leq 1$, shows that
\begin{eqnarray*}
{\rm RHS\ of \ } \eqref{e3.13}
&\leq&C\alpha^{-p} \left(\sum_{k>
0,\,2^k> R^2}\sum_{j\in
J_k}  R^{2\gamma}\alpha^2 |B_j|2^{k}R^{-2}\right)^{p/2}\\
&\leq& C \sum_{k>
0,\,2^k> R^2}\sum_{j\in
J_k}   R^{\gamma p} \Big(\frac{2^k}{R}\Big)^{np/2}2^{kp/2}R^{- p}.
\end{eqnarray*}
On the other hand, since $2^k>R^2$, $R>4$, $\gamma=n(1/p-1/2)$  and $n(1/p-1/2)-1/2\geq 0$, we have
\begin{eqnarray*}
 R^{\gamma p} \Bigg(\frac{2^k}{R}\Bigg)^{np/2}2^{kp/2}R^{- p}
 &=& R^{p(\gamma-{1\over  2})} \left(\frac{2^k}{R}\right)^{({n\over 2}+{1\over 2})p}
 \leq  \left(\frac{2^k}{R}\right)^{p (n({1\over p} -{1\over 2})- {1\over  2})+({n\over 2}+{1\over 2})p}
  \leq  \left(\frac{2^k}{R}\right)^{n}
  \leq  C |B_j|,
\end{eqnarray*}
which implies
\begin{eqnarray*}
{\rm LHS\ of \ } \eqref{e3.13}
&\leq&   C \sum_{j}    |B_j|
 \leq C\alpha^{-p}\|f\|_p^p
\end{eqnarray*}
as desired in \eqref{e3.10}.

\smallskip

It remains only to prove \eqref{e3.11}.  Note that in this case $k> 0$ and $R>4$.

By the Hermite expansion and the functional calculus of $H$, we can rewrite
$$
n_k({\SL})b_j=\sum_{\ell\in \mathbb{N}} n_k(\sqrt{2\ell+n})P_\ell b_j,
$$
where $P_\ell$ are the projections defined by~\eqref{e1.6}. Then by part (c) of Lemma~\ref{le2.4} and
the restriction type estimate~\eqref{e1.7}, we see that
\begin{eqnarray}\label{e3.14}
\Big\|n_k({\SL})b_j\Big\|^2_2
&=&\Big\lag \sum_{\ell\in \mathbb{N}} n_k^2(\sqrt{2\ell+n})P_\ell b_j,b_j\Big\rag\nonumber\\
&\leq& C \sum_{\ell\in \mathbb{N}} 2^{-2\delta(p)k}\Bigg(1+2^k\Big|{\sqrt{2\ell+n}\over R}-1\Big|\Bigg)^{-2N}
\|P_\ell b_j\|_2^2\nonumber\\
&\leq&C \sum_{\ell\in \mathbb{N}} 2^{-2\delta(p)k}\left(1+2^k\Big|{\sqrt{2\ell+n}\over R}-1\Big|\right)^{-2N}\ell^{\delta(p)-1/2}\| b_j\|_p^2\nonumber\\
&\leq& C\sum_{\ell\in \mathbb{N}} 2^{-2\delta(p)k}\left(1+2^k\Big|{\sqrt{\ell}\over R}-1\Big|\right)^{-2N}\ell^{\delta(p)-1/2}\| b_j\|_p^2 .
\end{eqnarray}
We split this sum into three parts:
\begin{eqnarray} \label{e3.15}
\sum_{\ell\in \mathbb{N}} &=&
    \sum_{R^2(1-2^{-k})^2-1< \ell< R^2(1+2^{-k})^2+1}
	 +
 \sum_{\ell\ge R^2(1+2^{-k})^2+1}
 +\sum_{0<\ell\le R^2(1-2^{-k})^2-1}\nonumber\\
 &=:& \textup{(I)}+\textup{(II)}+\textup{(III)}.
\end{eqnarray}
For $\textup{(I)}$, we can control each term in the summation by the same bound, namely $C2^{-2\delta(p)k}R^{2\delta(p)-1}$, because for $\ell$ in this range, the expression with exponent $-2N$ is almost 1.
 So the key point is to count how many terms there are in the summation. If $2^k\leq R^2$, then there are at most $R^22^{-k}$ terms (up to multiplication by an absolute constant) in the summation, and if $2^k>R^2$, then there are at most six terms in the summation. Thus we see that
\begin{eqnarray*}
\textup{(I)}&\leq&
\left\{
\begin{array}{rr}
C2^{-2\delta(p)k}R^{2\delta(p)-1},& \ \ {\rm if} \ 2^k> R^2;\\[8pt]
C R^2 2^{-k}2^{-2\delta(p)k}R^{2\delta(p)-1},& \ \ {\rm if} \ 2^k\leq R^2;
\end{array}
\right. \\
&=& \left\{
\begin{array}{rr}
C\left(\frac{2^k}{R}\right)^{2n(1/2-1/p)} \frac{2^k}{R^2},& \ \ {\rm if} \ 2^k> R^2;\\[8pt]
C\left(\frac{2^k}{R}\right)^{2n(1/2-1/p)},& \ \ {\rm if} \ 2^k\leq R^2;
\end{array}
\right. \\
 &=&  C\left(\frac{2^k}{R}\right)^{2n(1/2-1/p)} \max\Big\{1, \frac{2^k}{R^2}\Big\}.
\end{eqnarray*}
We briefly highlight why the extra factor $\max\Big\{1, 2^k/R^2\Big\}$ is present here.
When the manifold on which $f$ is defined is compact, since $R>4$, $2^k/R^2$ is less than $2^k/R$ which is the radius of the support of $b_j$ and so $2^k/R^2$ is less than the diameter of the manifold. So in this situation the factor $\max\Big\{1, 2^k/R^2\Big\}$ is controlled by an absolute constant. When the operator $L$ satisfies the restriction estimate~\eqref{rp}, we can use integration over the continuous spectrum, instead of summation over the eigenvalues, in the expression for $n_k({\sqrt L})b_j$. Then when $2^k/R^2$ is large, the interval of integration is correspondingly small. So in this situation, the extra factor $\max\Big\{1, 2^k/R^2\Big\}$ is canceled out, by a factor involving the length of the interval of integration. However, for our Hermite operator $H$, no matter how small the interval of integration or summation, there still may be an eigenvalue in it, so we do have the extra factor $\max\Big\{1, 2^k/R^2\Big\}$ in our estimate.

To estimate  the term  $\textup{(II)}$, we note that  the function
$$
x^{\delta(p)-1/2}\bigg(2^k\Big({\sqrt{x}\over R}-1\Big)\bigg)^{-2N}
$$
is decreasing for $x>R^2$ and $N$ sufficiently large, and thus
\begin{eqnarray*}
\textup{(II)}&\leq& \int_{R^2(1+2^{-k})^2}^\infty 2^{-2\delta(p)k}\bigg(2^k\Big({\sqrt{x}\over R}-1\Big)\bigg)^{-2N}x^{\delta(p)-1/2}dx\\
&\leq& C 2^{-2\delta(p)k} 2^{-2Nk} R^{2\delta(p)+1}\int_{1+2^{-k}}^\infty  \big(t-1\big)^{-2N}t^{2\delta(p)}dt\\
&\leq& C \left(\frac{2^k}{R}\right)^{2n(1/2-1/p)}.
\end{eqnarray*}
By symmetry, a similar  argument to that in $\textup{(II)}$ shows that $\textup{(III)}\leq C \left(2^k/R\right)^{2n(1/2-1/p)}.$

Collecting the estimates of the terms $\textup{(I)}$, $\textup{(II)}$ and $\textup{(III)}$,  together  with   \eqref{e3.14},
(iv) of Calder\'on-Zygmund decomposition of function $f$ and the fact $j\in J_k$, we arrive at the conclusion
that
$$
\Big\|n_k({\SL})b_j\Big\|_2\leq C\left(\frac{2^k}{R}\right)^{n(1/2-1/p)} \max\Big\{1,  {2^{k/2}}{R}^{-1}\Big\}
\|b_j\|_p\leq C \alpha |B_j|^{1/2}\max\Big\{1,  {2^{k/2}}{R}^{-1}\Big\}.
$$
This proves  \eqref{e3.11}, and completes the proof of Theorem~\ref{th1.1}.
\end{proof}

\medskip

\section{Extensions}
\setcounter{equation}{0}
In the previous section, we proved Theorem~\ref{th1.1}, where the potential is $V=|x|^2.$
However, the precise form of this potential does not play a fundamental role in the estimates.
Here we consider instead the operators
  $ H_V= - \Delta + V$
with a  positive potential $V$ which satisfies the following conditions:
\begin{equation}\label{e4.1}
V \sim |x|^2, \quad |\nabla V| \sim |x|, \quad |\partial_x^2 V| \le 1.
\end{equation}
Under these assumptions the operator $H_V$
is a nonnegative self-adjoint operator acting on the
space $L^2$.   Such an operator admits a spectral
resolution
\begin{eqnarray*}
H_V=\int_0^{\infty} \lambda dE_{H_V}(\lambda).
\end{eqnarray*}
Now, the Bochner-Riesz means of order $\delta\geq 0$   can be defined by
 \begin{equation}\label{e4.2}
 S^{\delta}_R(H_V) f :=  \int_0^{R^2} \left(1-\frac{\lambda}{R^2}\right)^{\delta}dE_{H_V}(\lambda) f, \ \ \ \ f\in L^2.
   \end{equation}
Then, just as for the Hermite operator $H$, the Bochner-Riesz means $S_R^\delta(H_V)$ are of weak-type $(p,p)$ uniformly
in $R>0$, as we now show.

 \begin{theorem}\label{th4.1} \, Suppose the potential $V$ satisfies~\eqref{e4.1}. For $n\geq 2$ and  $1\leq p\leq 2n/(n+2),$ there is a constant $C$ independent of $R$ for which
   \begin{eqnarray*}
   \|S_R^{\delta(p)}(H_V) f\|_{L^{p, \infty}} \leq C\|f\|_{p}.
   \end{eqnarray*}
\end{theorem}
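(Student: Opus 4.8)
The plan is to show that the entire proof of Theorem~\ref{th1.1} given in Section~3 goes through verbatim with $H$ replaced by $H_V$, since every ingredient used there was already stated (or can be stated) for $H_V$ rather than only for the Hermite operator. Concretely, I would proceed as follows. First, I would observe that the heat kernel domination~\eqref{e2.2}--\eqref{e2.3} and hence the finite speed of propagation property~\eqref{FS} hold for $H_V$ under assumption~\eqref{e4.1}, exactly as recorded in Section~2; therefore Lemma~\ref{le2.1} (support of $K_{F(\sqrt{H_V})}$) applies. Second, the $L^2$ almost-orthogonality statement Lemma~\ref{le2.5} is stated directly for $H_V$, so it is available. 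Third, the \emph{a priori} bound~\eqref{e1.8} is precisely Lemma~\ref{le2.2}, which is proved for general $H_V$ satisfying~\eqref{e2.1}$=$\eqref{e4.1}. Fourth, the two multiplier-decomposition lemmas, Lemmas~\ref{le2.3} and~\ref{le2.4}, concern only the multiplier function $S_R^{\delta(p)}(\lambda^2)$ and are independent of the operator.

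The one genuine input specific to the Hermite operator used in Section~3 is the restriction-type eigenfunction estimate~\eqref{e1.7}, i.e.\ $\|P_k f\|_2 \le C k^{(\delta(p)-1/2)/2}\|f\|_p$ for $1\le p\le 2n/(n+2)$. For general $H_V$ the spectrum need not be $\{2k+n\}$, so I would replace~\eqref{e1.7} by its natural spectral-measure analogue: a bound of the form $\|E_{\sqrt{H_V}}[\lambda,\lambda+1)\|_{p\to 2}\le C(1+\lambda)^{\delta(p)}$, equivalently the estimate that in Section~3 is used only in the aggregated forms appearing in~\eqref{e3.5} and~\eqref{e3.14} (where $\sum_\ell n_k^2(\sqrt{2\ell+n})\|P_\ell b_j\|_2^2$ is summed against $\ell^{\delta(p)-1/2}$). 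This estimate for $H_V$ with $V$ as in~\eqref{e4.1} is exactly the content of~\cite[Corollary 6.3]{CLSY} in the form~\eqref{e2.44}; more precisely the spectral cluster bound underlying~\eqref{e2.44} is what we need, and it is what Karadzhov-type arguments, adapted to $H_V$, provide. Once this replacement is made, the three-part splitting of the $\ell$-sum in~\eqref{e3.15}, the counting argument for term~(I), and the monotonicity/integral comparison for terms~(II) and~(III) are unchanged, and one arrives at~\eqref{e3.11} for $H_V$.

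With~\eqref{e3.11} in hand for $H_V$, the rest is a transcription: the case $R\le 4$ uses only Lemma~\ref{le2.2} and the cluster bound applied to finitely many spectral pieces; the Calder\'on--Zygmund decomposition of $|f|^p$ is operator-independent; the estimate~\eqref{e3.1} for $g$ uses only the spectral theorem; the estimate~\eqref{e3.2} for $h_1$ uses Lemmas~\ref{le2.1}, \ref{le2.5}, \ref{le2.3} and the cluster bound; and the estimate~\eqref{e3.2} for $h_2$ splits into~\eqref{e3.9} (Lemma~\ref{le2.5} plus part~(b) of Lemma~\ref{le2.4}) and~\eqref{e3.10} (Lemma~\ref{le2.2}, Lemma~\ref{le2.5}, and inequality~\eqref{e2.10}), the latter being precisely where the \emph{a priori} bound absorbs the extra factor $\max\{2^{k/2}R^{-1},1\}$. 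None of these steps sees the explicit form $V=|x|^2$; they see only~\eqref{e4.1} through the heat-kernel bound, the finite-speed property, the \emph{a priori} bound of Lemma~\ref{le2.2}, and the cluster estimate.

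The main obstacle, and the only place requiring care beyond bookkeeping, is the spectral cluster estimate $\|E_{\sqrt{H_V}}[\lambda,\lambda+1)\|_{p\to2}\le C(1+\lambda)^{\delta(p)}$ for $1\le p\le 2n/(n+2)$ and its use in place of~\eqref{e1.7}: for the exact Hermite operator this follows from sharp pointwise bounds on Hermite functions, whereas for general $V$ as in~\eqref{e4.1} one must invoke the heat-kernel/Gaussian-bound machinery (the restriction-type estimates of~\cite{CLSY}, cf.\ \cite{DOS}) to obtain the same exponent. I would therefore state this cluster bound explicitly as a lemma (citing~\cite{CLSY}), verify that it enters the Section~3 argument only through the summed expressions in~\eqref{e3.5} and~\eqref{e3.14}, and then record that every other line of the proof of Theorem~\ref{th1.1} is valid verbatim for $H_V$, which establishes Theorem~\ref{th4.1}.
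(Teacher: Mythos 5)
Your overall strategy coincides with the paper's: Lemmas~\ref{le2.1}--\ref{le2.5} are indeed already stated and proved for general $H_V$ with $V$ satisfying \eqref{e4.1}, the multiplier decompositions are operator-independent, and the only new ingredient needed is a substitute for the Hermite bound \eqref{e1.7}, after which Section~3 is rerun verbatim. The paper's proof does exactly this, citing Theorem~4 of \cite{KoT} for the spectral cluster estimate
\[
\big\|E_{H_V}[\lambda^2,\lambda^2+1)\big\|_{p\to 2}\;\le\; C(1+\lambda)^{n(\frac1p-\frac12)-1},\qquad 1\le p\le 2n/(n+2).
\]

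There is, however, a genuine gap in your identification of that substitute. You state the required bound in the Sogge normalization $\|E_{\sqrt{H_V}}[\lambda,\lambda+1)\|_{p\to 2}\le C(1+\lambda)^{\delta(p)}$, i.e.\ for unit windows in $\sqrt{H_V}$. What \eqref{e3.5} and \eqref{e3.14}--\eqref{e3.15} actually use is the bound for unit windows in $H_V$ itself (equivalently, windows of length $\sim\lambda^{-1}$ in $\sqrt{H_V}$) with the smaller exponent $n(1/p-1/2)-1=\delta(p)-1/2$; for the Hermite operator this is precisely $\|P_\ell\|_{p\to2}^2\lesssim \ell^{\delta(p)-1/2}$. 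The Sogge form follows from the finer one by almost-orthogonal summation of $\sim\lambda$ clusters, but not conversely, and it is strictly too weak here: in the count for term (I) of \eqref{e3.15}, when $2^k>R$ the set $\{\mu:|\mu-R|\lesssim R2^{-k}\}$ has length $<1$ in $\mu$ but length $\sim R^22^{-k}$ in $\mu^2$, and covering it by a single unit window in $\sqrt{H_V}$ costs an extra factor $\min\{2^k/R,\,R\}$ in $\|n_k(\sqrt{H_V})b_j\|_2^2$ beyond \eqref{e3.11}. That loss breaks both \eqref{e3.9} and \eqref{e3.10}; note the final display in the proof of \eqref{e3.10} is tight, using only $\gamma-\tfrac12\ge0$. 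Moreover, your proposed source --- \eqref{e2.44}, i.e.\ \cite[Corollary~6.3]{CLSY} --- is a Sobolev-type bound for $(1+H_V)^{-\gamma/2}$, not a cluster estimate, and ``Karadzhov-type arguments adapted to $H_V$'' are not available, since the pointwise Hermite-function asymptotics disappear for a general $V\sim|x|^2$. The correct input is the Koch--Tataru estimate displayed above, proved for exactly the class \eqref{e4.1}; with it in place of \eqref{e1.7}, your transcription of Section~3 does go through.
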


\begin{proof}
It follows from Theorem 4 in  \cite{KoT} that  for all $\lambda \ge 0$  and
 all $1\leq p\leq 2n/(n+2)$
\begin{eqnarray}\label{e4.4}
\|E_{H_V}[\lambda^2,\, \lambda^2+1)\|_{p\to 2} \leq C(1+\lambda)^{n({1\over p}-{1\over 2})-1}.
\end{eqnarray}
With Lemma~\ref{le2.2} and the spectral projection  estimate \eqref{e4.4},  the argument   in the proof of Theorem~\ref{th1.1} also establishes Theorem~\ref{th4.1}.
\end{proof}

\bigskip

 \noindent
 {\bf Acknowledgements:}
P. Chen and L.A. Ward were supported by the Australian Research Council, Grant No.~ARC-DP160100153. P. Chen was also supported by NNSF of China, Grant No.~11501583, Guangdong Natural Science Foundation, Grant No.~2016A030313351 and the Fundamental Research Funds
for the Central Universities 161gpy45. J. Li was supported by the Australian Research Council, Grant No.~ARC-DP170100160, and by Macquarie University Research Seeding Grant.
 L.X. Yan was supported by the NNSF
of China, Grant No.~11471338, ~11521101 and ~11871480, and Guangdong Special Support Program. The authors would like to thank L. Song for helpful discussions.

\end{document}